\newcommand{\RR}{\mathbb{R}}
\newcommand{\PP}{\mathbb{P}}
\newcommand{\AAA}{\mathbb{A}}
\def\cB{{\mathcal B}}
\def\cC{{\mathcal C}}
\def\cI{{\mathcal I}}
\def\cX{{\mathcal X}}
\def\dd{{\mathbf d}}
\def\nn{{\mathbf n}}
\def\rr{{\mathbf r}}
\def\DD{{\mathbf D}}
\def\NN{{\mathbf N}}
\def\RR{{\mathbf R}}
\def\kk{{\mathbf k}}
\numberwithin{equation}{section}
\newtheorem{Theorem}{Theorem}[section]
\newtheorem{Lemma}[Theorem]{Lemma}
{ \theoremstyle{definition}
\newtheorem{Example}[Theorem]{Example}}
\begin{document}
\allowdisplaybreaks

\newcommand{\arXivNumber}{1710.02863}

\renewcommand{\PaperNumber}{031}

\FirstPageHeading

\ShortArticleName{Cartan Prolongation of a Family of Curves Acquiring a Node}

\ArticleName{Cartan Prolongation of a Family\\ of Curves Acquiring a Node}

\Author{Susan Jane COLLEY~$^\dag$ and Gary KENNEDY~$^\ddag$}

\AuthorNameForHeading{S.J.~Colley and G.~Kennedy}

\Address{$^\dag$~Department of Mathematics, Oberlin College, Oberlin, Ohio 44074, USA}
\EmailD{\href{mailto:sjcolley@math.oberlin.edu}{sjcolley@math.oberlin.edu}}
\URLaddressD{\url{https://www.oberlin.edu/susan-colley}}

\Address{$^\ddag$~Ohio State University at Mansfield, 1760 University Drive, Mansfield, Ohio 44906, USA}
\EmailD{\href{mailto:kennedy@math.ohio-state.edu}{kennedy@math.ohio-state.edu}}
\URLaddressD{\url{https://u.osu.edu/kennedy.28/}}

\ArticleDates{Received October 27, 2017, in final form April 03, 2018; Published online April 07, 2018}

\Abstract{Using the monster/Semple tower construction, we study the structure of the Cartan prolongation of the family $x_1x_2 = t$ of plane curves with nodal central member.}

\Keywords{curve families; nodal singularity; vector distributions; prolongation}
\Classification{58A30; 53A55; 58K50; 14D06; 14H99}

\section{Introduction} \label{intro}

The familiar process of implicit differentiation, when applied $k$ times to the equation $f(x,y)=0$ of a nonsingular plane curve $C$, yields a system of equations
\begin{gather}
f = 0, \nonumber\\
f_x + f_y y' = 0, \nonumber\\
f_{xx} + (2f_{yx} + f_{yy} y') y' + f_y y'' = 0, \nonumber\\
\quad \vdots\nonumber\\
f_{x \cdots x} + \cdots + f_{y} y^{(k)} = 0.\label{implicit}
\end{gather}
One can interpret (\ref{implicit}) as a system defining a curve in a $(k+2)$-dimensional manifold. Indeed, at each point of $C$ where $f_y \neq 0$, the equations of (\ref{implicit}) determine unique values of $y', y'', \ldots, y^{(k)}$, which we will call the \emph{curvilinear data}. At points where $f_y=0$ (i.e., at which the tangent line is vertical), one can reverse the roles of the variables, and the resulting system will determine values of $x'={d}x/{d}y, x'', \ldots, x^{(k)}$. In the literature of differential geometry, this process of obtaining a~curve in a higher-dimensional space is often called \emph{prolongation}.

The notion of prolongation dates back to Cartan's 1914 paper~\cite{MR1504722}. It was introduced as a means of understanding when two systems of differential equations should be considered to be equivalent. As explained by Bryant (in Section~6.2 of the unpublished lecture notes~\cite{BryantLectureNotes}), ``Intuitively, prolongation is just differentiating the equations you have and then adjoining those equations as new equations in the system''. Further discussion of the idea can be found in \cite[Section~4.2]{MR1240644}.

One can also prolong a singular curve. To make sense of this assertion, one needs to understand how to deal with the possibility that the higher-order curvilinear data may ``become infinite''. In other words, one needs to explain how to compactify the manifold of nonsingular curvilinear data. There is an elegant compactification, first explained in the literature of diffe\-ren\-tial geometry by Montgomery and Zhitomirskii~\cite{MR1841129,MR2599043}. Starting with an arbitrary surface $S$, their \emph{monster tower} is a sequence
\begin{gather*}
\cdots \to S(k) \to S(k-1) \to \cdots \to S(2) \to S(1) \to S(0)=S
\end{gather*}
in which each $S(k) \to S(k-1)$ is a fiber bundle with fiber $\PP^{1}$. The equations of (\ref{implicit}) can now be interpreted as a calculation within one of two \emph{regular charts} of $S(k)$. A similar calculation can be carried out for any curve in~$S$, and for a singular curve it will lead to curves that leave the regular charts. Their work on the monster construction was motivated in part by a body of literature about Goursat distributions, including classification of their possible singularities and the appearance of moduli in this classification; as representative contributions we mention~\cite{MR507769} and~\cite{MR2444398}.

At about the time of the publication of the Montgomery--Zhitomirskii monograph~\cite{MR2599043}, it was observed by Alex Castro that in fact the monster construction was already known in algebraic geometry, in literature beginning with Gherardelli~\cite{MR0017961} and Semple~\cite{MR0061406}. In this strand of literature, instead of ``prolongation'' one will find ``Nash blowup'' and ``lifting''; here the tower is called the \emph{Semple bundle tower}. Our own contributions were in the papers \cite{MR1100358,MR1143546,MR1287696}.

In \cite{MR2434453}, Cartan prolongation is applied to resolution of plane curves. This study naturally leads to the following problem, stated as Problem A in the last section of the cited paper: ``Investigate the extent to which deformation and prolongation of curves commute''. In this note we offer a response, based on the standard theory of extension of a flat family over a~punctured one-dimensional base, as in Proposition 9.8 of Hartshorne's text~\cite{MR0463157}.

For the moment, we have just written an account of the theory in the simplest nontrivial case: a family $\cX$ of curves $x_1 x_2=t$. Here $x_1$ and $x_2$ are local coordinates on a neighborhood $U$ of a point on a surface~$S$, and~$t$ is a local coordinate on a neighborhood $T$ of $0$ on the line. (We use subscripted notations to be consistent with the coordinates introduced in Section~\ref{charts}.) The central member of the family is the \emph{nodal curve} $x_1 x_2=0$. We are asking what happens when one tries to prolong all members of the family. For $t \neq 0$ the process of prolongation, carried out~$k$ times, gives us a~family of nonsingular curves on the monster space~$S(k)$. What happens over $t=0$?

Hartshorne's Proposition~9.8 says that there is a unique extension to a flat family defined even over $t=0$. The total space of the family
of $k$th order prolongations (including the central member) will be denoted by $\cX(k)$; it is a subvariety of $S(k) \times T$. The members of the family will be denoted by $X(k)_t$; we call the central member $X(k)_0$ the \emph{prolongation of the nodal curve}. Here we will work out an explicit description of the central member: it consists of $2^k+1$ irreducible components, which we call \emph{twigs}. The twigs meet at ordinary nodes, forming a chain; only the two curves at the ends will be reduced schemes. When reduced, each twig (except the two at the ends) is a copy of the projective line. The two end twigs are the Cartan prolongations of $x_2=0$ (which we call the \emph{left end}) and of $x_1=0$ (the \emph{right end}). The other twigs will be called \emph{interior twigs}. Fig.~\ref{thirdprolongation} shows the case $k=3$.

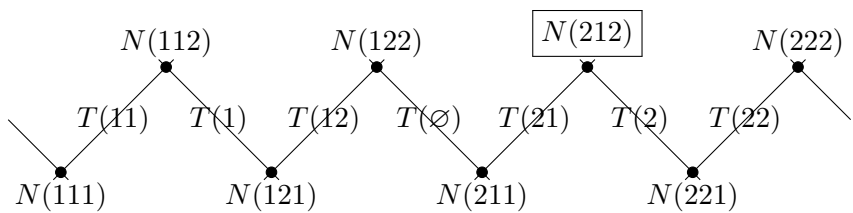
\begin{figure}[htbp]\centering
\begin{tikzpicture}
\draw (0.8,-0.8) --(1.6,-1.6);
\draw (2.8,0) --(4.4,-1.6);
\draw (5.6,0) --(7.2,-1.6);
\draw (8.4,0) --(10.0,-1.6);
\draw (11.2,0) --(12.0,-0.8);

\draw (1.4,-1.6) --(3.0,0);
\draw (4.2,-1.6) --(5.8,0);
\draw (7.0,-1.6) --(8.6,0);
\draw (9.8,-1.6) --(11.4,0);

\draw[fill] (1.5,-1.5) circle [radius=0.07];
\draw[fill] (4.3,-1.5) circle [radius=0.07];
\draw[fill] (7.1,-1.5) circle [radius=0.07];
\draw[fill] (9.9,-1.5) circle [radius=0.07];

\draw[fill] (2.9,-0.1) circle [radius=0.07];
\draw[fill] (5.7,-0.1) circle [radius=0.07];
\draw[fill] (8.5,-0.1) circle [radius=0.07];
\draw[fill] (11.3,-0.1) circle [radius=0.07];

\node [below] at (1.5,-1.5) {$N(111)$};
\node [below] at (4.3,-1.5) {$N(121)$};
\node [below] at (7.1,-1.5) {$N(211)$};
\node [below] at (9.9,-1.5) {$N(221)$};

\node [above] at (2.9,-0.1) {$N(112)$};
\node [above] at (5.7,-0.1) {$N(122)$};
\node [above] at (8.5,-0.1) {$\boxed{N(212)}$};
\node [above] at (11.3,-0.1) {$N(222)$};

\node at (2.2,-0.8) {$T(11)$};
\node at (3.6,-0.8) {$T(1)$};
\node at (5.0,-0.8) {$T(12)$};
\node at (6.4,-0.8) {$T(\varnothing)$};
\node at (7.8,-0.8) {$T(21)$};
\node at (9.2,-0.8) {$T(2)$};
\node at (10.6,-0.8) {$T(22)$};
\end{tikzpicture}

\caption{The third prolongation of the nodal curve, with nodes labeled lexicographically; the labeling of the twigs will be explained at the end of Section \ref{schemestruc}. The box is related to Fig.~\ref{binomstolength3} and to the running example, which begins with Example \ref{beginrun}.} \label{thirdprolongation}
\end{figure}

Again by standard theory, the flat family $\cX(k)$ of curves gives a family of 1-dimensional cycle classes. For $t\ne 0$ this class is just the fundamental class of the $k$th prolongation of $X_t$. The class associated to the central member will be denoted by $Z(k)_0$. As we will see,
it is a linear combination of fundamental classes of the $2^k+1$ curves mentioned above, and (except for the two curves at the end) the coefficients will be greater than~1.

We begin in Section \ref{monster} by briefly recalling the construction of the monster tower and explaining prolongation of a curve into it. In Section~\ref{charts} we describe a natural system of charts on the spaces in the monster tower, citing the general description of \cite{MR3720327} but specializing to the case of a 2-dimensional base. In Section \ref{schemestruc} we give equations defining $\cX(k)$ in each of these charts;
from these equations we immediately deduce the irreducible components, i.e., the twigs. Section~\ref{cyclestruc} describes the multiplicities of these twigs. Again referring to our earlier work (joint with Castro and Shanbrom) in~\cite{MR3720327}, in Section \ref{strat} we analyze how the twigs meet the strata of the natural coarse stratification of the monster space. We illustrate the ideas by a running example, beginning with Example~\ref{beginrun}. Finally in Section~\ref{speculate}, we briefly speculate about what one should expect if one carries out a similar analysis for a locally irreducible curve singularity such as $y^2=x^n$.

\section{The monster tower; prolongation of curves} \label{monster}
We begin by briefly explaining the construction of the monster tower, closely following the account in Section~1 of our recent paper with Castro and Shanbrom~\cite{MR3720327}. That paper also gives a~brief history of the construction, which was discovered independently in two separate
strands of the mathematical literature. (For other treatments, see \cite{MR1841129,MR2599043}, or our ancient papers~\cite{MR1100358,MR1143546,MR1287696}.)

Suppose that $M$ is a smooth manifold or nonsingular algebraic variety over an algebraically closed field~$\kk$ of characteristic~0. Suppose that
$\cB$ is a rank $b$ subbundle of its tangent bundle~$TM$. Let $\widetilde{M}=\PP\cB$, the total space of the projectivization of the bundle, and let $\pi \colon \widetilde{M} \to M$ be the projection. A point $\widetilde{p}$ of $\widetilde{M}=\PP\cB$ over $p \in M$ represents a line inside the fiber of $\cB$ at $p$, and since $\cB$ is a subbundle of $TM$, this is a \emph{tangent direction} to~$M$ at~$p$. Let
\begin{gather*}
{d}\pi \colon \ T{\widetilde{M}} \to \pi^*TM
\end{gather*}
denote the derivative map of $\pi$. A tangent vector to $\widetilde{M}$ at $\widetilde{p}$ is said to be a \emph{focal vector} if it is mapped by ${d}\pi$ to a tangent vector at~$p$ in the direction represented by $\widetilde{p}$; in particular a vector mapping to the zero vector (called a \emph{vertical vector}) is considered to be a focal vector. The set of all focal vectors forms a subbundle $\widetilde \cB$ of~$T{\widetilde{M}}$, called the \emph{focal bundle}; its rank is again~$b$. Thus we can iterate this construction to obtain a tower of spaces (i.e., smooth manifolds or nonsingular algebraic varieties) together with their associated bundles.

If we begin this construction with a surface $S$, taking $\cB$ to be its tangent bundle, then the resulting tower
\begin{gather*}
\cdots \to S(k) \xrightarrow{\pi_k} S(k-1) \xrightarrow{\pi_{k-1}} \cdots \to S(2) \to S(1) \to S(0)=S
\end{gather*}
is called the \emph{monster tower}, and the spaces $S(1), S(2), \ldots$ are called \emph{monster spaces}. Observe that each $S(k)$ is the total space of a~$\PP^{1}$-bundle over~$S(k-1)$; in particular, $S(1)$ is the total space of the tangent bundle~$\PP TS$. Sometimes we say that~$S(k)$ is the monster space at \emph{level~$k$}. If $p$ is a point of~$S(k)$, we will say that it \emph{lies over}~$\pi_k(p)$ and over $\pi_{k-1}\pi_k(p)$, etc. The associated rank~2 focal bundle on $S(k)$ will be denoted by~$\cB(k)$.

Now consider a nonsingular point $p$ of a curve $C$ in $S(k)$. We can associate to $p$ the point of~$\PP TS(k)$ representing the tangent direction of $C$ at~$p$. Thus, away from singularities, we have a~curve $\widetilde{C}$ in~$\PP T{S(k)}$, the \emph{prolongation} or \emph{lift}. Now suppose that $C$ itself is the prolongation of a curve from $S(k-1)$. Note that this curve is $\pi_k(C)$; thus the tangent vectors to $\widetilde{C}$ are focal vectors. Therefore $\widetilde{C}$ is in fact a curve in $S(k+1)$.

We also want to lift a singular curve $C$ in $S(k)$, and we do so by fiat: we lift at all nonsingular points of $C$, and then take the closure. If, away from the singularity, $C$ is obtained from lifting a curve in $S(k-1)$, then again $\widetilde{C}$ lies entirely within $S(k+1)$. For example, the lift of the plane curve $xy=0$ will consist of two irreducible curves, the lifts of its individual nonsingular components $x=0$ and $y=0$. Note that this is \emph{not} what we have previously called the prolongation of the nodal curve, since here we are considering the curve by itself, without reference to the family in which it is situated. Similarly, the lift of the cuspidal curve $y^2=x^3$ will be a nonsingular curve; note that it is tangent to the fiber of $S(1)$ over the origin.

Consider the $\PP^1$-fiber of $S(k)$ over a point of $S(k-1)$; following \cite[Definition~2.17]{MR2599043}, we call it a \emph{vertical curve}. Note that its prolongation is\ a curve in $S(k+1)$; indeed, all of the tangent vectors are vertical. This curve will be called the \emph{prolongation of a vertical curve}, and we apply the same terminology to the curves obtained by further prolongation.

In the literature of differential geometry, e.g., \cite{MR2444398}, the monster tower emerges from a study of Goursat distributions. The fundamental work \cite{MR1841129} shows that locally each Goursat distribution arises from the prolongation of a suitable curve on a surface. This construction is not canonical; that is, two different curves may lead to the same Goursat distribution. Perhaps the simplest example begins with the singular parametrized curve $x = t^2$, $y = t^3$, whose prolongation yields the curve
\begin{gather*}
x = t^2, \qquad y = t^3, \qquad y' = \frac{3}{2}t,
\end{gather*}
where $y' = {d}y/{d}x$. This is a curve on a chart of $S(1)$. On may also begin with the nonsingular curve $\overline{x} = \frac{3}{2}t$, $\overline{y} = t^3$, and prolong it to
\begin{gather*}
\overline{x} = \frac{3}{2}t, \qquad \overline{y} = t^3, \qquad \overline{y}' = 2 t^2,
\end{gather*}
where $\overline{y}' = {d}\overline{y}/{d}\overline{x}$. The declarations $\overline{x} = y'$, $\overline{y} = y$, $\overline{y}' = 2x$ identify the two prolongations. Thus in the differential-geometric literature one sees no distinction between the resulting Goursat distributions. For us, however, the fundamental object is the curve on the base rather than the distribution at level~$1$.

We were aware of this ambiguity when we wrote \cite{MR1143546}, in which we observed (on page~35) that the Semple bundle tower over the projective plane $\PP^2$ is the same as the Semple bundle tower over the dual projective plane $\check{\PP}^2$. This means that if one considers, as in classical projective geometry, a curve $C \subset \PP^2$ and its dual curve $\check{C} \subset \check{\PP}^2$, they have the same prolongation, considered as a~curve in the incidence correspondence of points and lines, which can be identified with $\PP^2(1)$. This is the key fact in understanding the classical Pl\"{u}cker formulas (see \cite[Section~9.1]{MR2975988}).

\section{Coordinate charts on the monster tower} \label{charts}

We now recall a natural system of $2^k$ coordinate charts on the $k$th monster space. They are essentially the coordinates first described in~\cite{MR507769} and~\cite{MR704040} in a study of Pfaffian systems; in the subsequent literature of differential geometry, they are known as Kumpera--Ruiz coordinates. Independently and in a seemingly different context, we developed coordinate systems for the Semple bundle tower over a base surface \cite{MR1100358,MR1143546,MR1287696}. The paper \cite{MR1841129} shows that the Kumpera--Ruiz coordinates are coordinates on the monster spaces; Castro's later realization that the Semple bundle and monster tower constructions are the same thus implies that the two notions of coordinate systems must agree. These coordinates were extended to towers over higher-dimensional base manifolds and systematized in~\cite{MR2262175}. In \cite{MR3720327}, we explained a convenient general method for naming these coordinates. We use that method here, specializing to the case where the base is a surface.

Each chart is a copy of $U \times \AAA^k$, the product of the base neighborhood $U$ and $k$-dimensional affine space, and on each chart there are $k+2$ coordinate functions: the pullback of $x_1$ and $x_2$ from $U$, together with $k$ affine coordinates. By a recursive procedure, two of these coordinates will be designated as \emph{active coordinates}. One will be identified as the \emph{new coordinate} and denoted by $\nn$, and the other will be identified as the \emph{retained coordinate} and denoted by~$\rr$. In addition (for $k>0$) a third coordinate will be designated as the \emph{deactivated coordinate} and denoted by~$\dd$. At each point~$p$ of the chart, the fiber of $\cB(k)$ consists of tangent vectors for which
either~$d\nn$ or~$d\rr$ is nonzero. Here is the recursive procedure: beginning with a chart with~$\nn$, $\rr$, and $\dd$ (plus $k-1$ unnamed coordinates), create two charts at the next level by choosing one of the following two options:
\begin{itemize}\itemsep=0pt
\item Assuming the differential $d\rr$ is nonzero, let $\NN=d\nn/d\rr$; then set $\RR=\rr$ and $\DD=\nn$. (We call this the \emph{regular choice}.)
\item Assuming the differential $d\nn$ is nonzero, let $\NN=d\rr/d\nn$; then set $\RR=\nn$ and $\DD=\rr$. (We call this the \emph{critical choice}.)
\end{itemize}
We remark that in either case we have
\begin{gather} \label{dddr}
\NN=\frac{{d}\DD}{{d}\RR}.
\end{gather}
To begin the process we always make a regular choice, but there are two possibilities. On $U$ the active coordinates are $x_1$ and $x_2$, either of which may be designated as the retained coordinate, and there is no deactivated coordinate.

Our labeling of charts will use the alphabet $\{1,2\}$. We will find it convenient to use the following notational device: $\overline{p}$ denotes the symbol opposite to $p$, i.e., $\overline{1}=2$ and $\overline{2}=1$.

Each chart on $S(k)$ will be labeled by $\cC(p_1p_2\cdots p_k)$, where $p_1p_2\cdots p_k$ is a string from the alphabet. In particular $\cC(\varnothing)$ is $U$ itself. In $\cC(p_1p_2\cdots p_k)$, we will use $2(k+1)$ coordinate names
\begin{gather*}
x_1, \enskip x_1(p_1), \enskip x_1(p_1 p_2), \enskip x_1(p_1 p_2 p_3), \enskip \ldots, \enskip x_1(p_1p_2\cdots p_k), \\
x_2, \enskip x_2(p_1), \enskip x_2(p_1 p_2), \enskip x_2(p_1 p_2 p_3), \enskip \ldots, \enskip x_2(p_1p_2\cdots p_k),
\end{gather*}
with $k$ of these names being redundant. The meaning of these coordinates is explained by recursion. In the chart $\cC(p_1p_2\cdots p_{k-1})$ on $S(k-1)$, the two active coordinates are $x_1(p_1p_2\cdots p_{k-1})$ and $x_2(p_1p_2\cdots p_{k-1})$. To build the chart $\cC(p_1p_2\cdots p_{k-1}1)$ on $S(k)$, we assume that the diffe\-ren\-tial $dx_1(p_1p_2\cdots p_{k-1})$ is nonzero. We introduce the new active coordinate
\begin{gather*}
x_{2}(p_1p_2\cdots p_{k-1}1):=\frac{{d}x_{2}(p_1p_2\cdots p_{k-1})}{{d}x_{1}(p_1p_2\cdots p_{k-1})}
\end{gather*}
and give a new (redundant) name to the retained active coordinate:
\begin{gather*}
x_{1}(p_1p_2\cdots p_{k-1}1):=x_{1}(p_1p_2\cdots p_{k-1}).
\end{gather*}
To build the chart $\cC(p_1p_2\cdots p_{k-1}2)$, we reverse the roles:
\begin{gather*}
x_{1}(p_1p_2\cdots p_{k-1}2):=\frac{{d}x_{1}(p_1p_2\cdots p_{k-1})}{{d}x_{2}(p_1p_2\cdots p_{k-1})},\\
x_{2}(p_1p_2\cdots p_{k-1}2):=x_{2}(p_1p_2\cdots p_{k-1}).
\end{gather*}
We remark that the regular choice leads to a coordinate chart $\cC(p_1p_2\cdots p_k)$ in which $p_{k-1}=p_k$, whereas the critical choice leads to a chart in which $p_{k-1} \neq p_k$.

For each chart $\cC(p_1p_2\cdots p_k)$, let $N(p_1p_2\cdots p_k)$ be the origin. In particular, the point $x_1=x_2=0$ is the origin of $U=\cC(\varnothing)$. The map $\pi_k\colon S(k)\to S(k-1)$ sends $N(p_1p_2\cdots p_k)$ to $N(p_1p_2\cdots p_{k-1})$.

\begin{Example} \label{beginrun}
This is the beginning of an example that will continue throughout the paper. In chart $\cC(212)$, the coordinates are
\begin{gather*}
x_1, \ x_2, \ x_1(2)=\frac{{d}x_1}{{d}x_2}, \ x_2(21)=\frac{{d}x_2}{{d}x_1(2)}, \ \text{and} \ x_1(212)=\frac{{d}x_1(2)}{{d}x_2(21)}.
\end{gather*}
The new active coordinate is $\nn=x_1(212)$; the retained active coordinate is $\rr=x_2(21)$; the other coordinates are inactive, including the deactivated coordinate $\dd=x_1(2)$.
\end{Example}

\begin{Lemma}$N(p_1p_2\cdots p_k)$ is the unique point of $S(k)$ lying over the origin of $U$ that is in $\cC(p_1p_2\cdots p_k)$ and in no other chart.
\end{Lemma}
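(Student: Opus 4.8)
The plan is to reason by induction on $k$, exploiting the recursive construction of the charts. For $k=0$ the only chart is $\cC(\varnothing)=U$ and the only point over the origin is the origin itself, so there is nothing to prove. For the inductive step, fix a string $p_1\cdots p_k$ and consider the point $N=N(p_1\cdots p_k)\in S(k)$. First I would note that $N$ lies over $N(p_1\cdots p_{k-1})\in S(k-1)$, since $\pi_k$ sends origins to origins. By the inductive hypothesis, $N(p_1\cdots p_{k-1})$ lies in $\cC(p_1\cdots p_{k-1})$ and in no other chart of $S(k-1)$. Now any chart $\cC(q_1\cdots q_k)$ on $S(k)$ maps under $\pi_k$ into the chart $\cC(q_1\cdots q_{k-1})$ on $S(k-1)$; hence if $N$ were in $\cC(q_1\cdots q_k)$, its image $N(p_1\cdots p_{k-1})$ would be in $\cC(q_1\cdots q_{k-1})$, forcing $q_1\cdots q_{k-1}=p_1\cdots p_{k-1}$. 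So the only candidate charts containing $N$ are $\cC(p_1\cdots p_{k-1}1)$ and $\cC(p_1\cdots p_{k-1}2)$, and it remains to show $N$ lies in exactly the one indexed by $p_k$.

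The key local computation takes place in the fiber $\PP\cB(k-1)$ over $N(p_1\cdots p_{k-1})$, a projective line. Writing $a=x_1(p_1\cdots p_{k-1})$ and $b=x_2(p_1\cdots p_{k-1})$ for the two active coordinates at level $k-1$, a point of this fiber is a tangent direction, i.e.\ a ratio $[da:db]$. The chart $\cC(p_1\cdots p_{k-1}1)$ covers the locus where $da\neq 0$, with coordinate $x_2(p_1\cdots p_{k-1}1)=db/da$, and its origin $N(p_1\cdots p_{k-1}1)$ is the direction $[da:db]=[1:0]$. Symmetrically, $\cC(p_1\cdots p_{k-1}2)$ covers the locus $db\neq 0$, with coordinate $da/db$, and its origin is the direction $[0:1]$. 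These two origins are distinct points of the $\PP^1$-fiber: the direction $[1:0]$ lies in $\cC(p_1\cdots p_{k-1}1)$ but not in $\cC(p_1\cdots p_{k-1}2)$ (where the coordinate $da/db$ would be infinite), and vice versa. Since $N(p_1\cdots p_k)$ is by definition the origin of $\cC(p_1\cdots p_k)$, it equals $[1:0]$ if $p_k=1$ and $[0:1]$ if $p_k=2$; either way it lies in $\cC(p_1\cdots p_k)$ and in no other chart at level $k$. This completes the induction.

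I expect the only subtle point to be the bookkeeping in the first paragraph: making precise that the chart $\cC(q_1\cdots q_k)$ on $S(k)$ sits inside $\pi_k^{-1}(\cC(q_1\cdots q_{k-1}))$. This is immediate from the recursive definition of the coordinates, since every coordinate function on $\cC(q_1\cdots q_k)$ is either pulled back from $\cC(q_1\cdots q_{k-1})$ or is the newly introduced ratio, whose definition already presupposes being in $\cC(q_1\cdots q_{k-1})$; so the chart $\cC(q_1\cdots q_k)$ is by construction an open subset of the restriction of the $\PP^1$-bundle to $\cC(q_1\cdots q_{k-1})$. Everything else is the elementary observation that the two standard affine charts on $\PP^1$ have distinct origins, each missed by the other chart.
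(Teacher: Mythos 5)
Your argument establishes only half of the lemma. What you prove is that $N(p_1\cdots p_k)$ lies in $\cC(p_1\cdots p_k)$ and in no other chart; for that half your induction is essentially the paper's own argument (reduce to the two sibling charts over $\cC(p_1\cdots p_{k-1})$, then note that their origins are the two distinct points $[1:0]$ and $[0:1]$ of the $\PP^1$-fiber, each missed by the other chart), and it is correct. But the lemma also asserts \emph{uniqueness}: $N(p_1\cdots p_k)$ is the \emph{only} point of $S(k)$ lying over the origin of $U$ with this property. Equivalently, every other point $P$ of $\cC(p_1\cdots p_k)$ lying over the origin must belong to at least one additional chart. Nothing in your proposal addresses this; indeed your induction never invokes the uniqueness half of the inductive hypothesis, which is a sign that it is not being propagated.

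The missing step is handled in the paper by a separate converse argument. If $P\neq N(p_1\cdots p_k)$ lies over the origin of $U$ and has affine coordinates $(X_1,\ldots,X_j,0,\ldots,0)$ in $\cC(p_1\cdots p_k)$ with $X_j\neq 0$ the last nonzero one, then $P$ lies over the point $Q=(0,0,X_1,\ldots,X_j)$ of $\cC(p_1\cdots p_j)$; since the final coordinates in the two sibling charts at level $j$ are reciprocals of one another and $X_j$ is invertible, $Q$ also appears in $\cC(p_1\cdots p_{j-1}\overline{p_j})$ with last coordinate $1/X_j$, and hence $P$ lies in some chart whose string begins with $p_1\cdots p_{j-1}\overline{p_j}$. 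You would need to add an argument of this kind to complete the proof.
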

\begin{proof} We use induction on $k$. Since $N(p_1p_2\cdots p_k)$ maps to $N(p_1p_2\cdots p_{k-1})$, it can only appear in chart $\cC(p_1p_2\cdots p_{k-1}1)$ or $\cC(p_1p_2\cdots p_{k-1}2)$. The coordinates of a point in these two charts are the same except for the last ones, which are reciprocals. Thus $N(p_1p_2\cdots p_k)$ only appears in $\cC(p_1p_2\cdots p_k)$.

Conversely, suppose $P \neq N(p_1p_2\cdots p_k)$ is a point of $\cC(p_1p_2\cdots p_k)$. Suppose $P = (0, 0, X_1,$ $X_2, \ldots, X_j, 0, \ldots, 0)$
in the coordinate system on $\cC(p_1p_2\cdots p_k)$, where $X_j \neq 0$. Then $P$ lies over the point $Q=(0,0,X_1,X_2,\ldots,X_j) \in \cC(p_1p_2\cdots p_j)$. The point $Q$ also appears in chart $\cC(p_1p_2\cdots \overline{p_j})$ as $(0,0,X_1,X_2,\ldots,1/X_j)$. Therefore $P$ also lies in some chart whose string begins with $p_1p_2\cdots \overline{p_j}$.
\end{proof}

\section{Prolongation of the nodal curve: scheme structure} \label{schemestruc}
We now begin our analysis of the prolongation of the nodal curve. Recall that we are studying the one-parameter family $\cX$ of curves $x_1 x_2=t$, and that our goal is to describe the total space~$\cX(k)$ of the family of $k$th order prolongations, whose central member $X(k)_0$ is called the \emph{$k$th prolongation of the nodal curve}. In this section, we prove the main results. In Theorem~\ref{yuugetheorem}, we describe $X(k)_0$; in Theorem~\ref{nodetwigmap}, we relate it to $X(k-1)_0$. We will prove the two theorems simultaneously. At the end of the section we will continue our running example.

For each chart $\cC=\cC(p_1p_2\cdots p_k)$ of $S(k)$, we denote the ideal of $\cX(k)$ in the coordinate ring of $\cC \times T$ by $\cI(k)$;
the ideal of $X(k)_0$ in the chart $\cC$ is denoted by $\cI(k)_0$. In our description of $X(k)_0$, we use a binomial $B(p_1p_2\cdots p_k)$.
These \emph{node binomials} are defined by recursion, beginning with the monomial $B(\varnothing)=x_1 x_2$. To describe this recursion, we use the
notations $\nn$, $\rr$, $\dd$ and $\NN$, $\RR$, $\DD$ as explained in Section~\ref{charts}. The binomial $B(p_1p_2\cdots p_{k-1})$ will be of the form
\begin{gather*}
\alpha \nn \rr + \beta \dd,
\end{gather*}
where $\alpha$ and $\beta$ are certain integers, and from it we derive two new binomials. (For $B(\varnothing)$ we have $\alpha=1$ and $\beta=0$.) If we make the regular choice, then the associated node binomial is
\begin{gather*}
B(p_1p_2\cdots p_{k}) = \alpha \NN\rr + (\alpha+\beta) \nn = \alpha \NN\RR + (\alpha+\beta) \DD
\end{gather*}
(where the new active variable $\NN$ represents $d\nn/d\rr$); if we make the critical choice, then the associated node binomial is
\begin{gather*}
B(p_1p_2\cdots p_{k}) = (\alpha + \beta) \NN\nn + \alpha \rr = (\alpha+\beta)\NN\RR + \alpha \DD
\end{gather*}
(with $\NN$ representing $d\rr/d\nn$). The first few node binomials are shown in Fig.~\ref{binomstolength3}.

The node binomials naturally arise from prolongation calculations in the relevant charts of~$S(k)$. We obtain them by recursion, using implicit differentiation and working in the appropriate charts. In the recursive step, we implicitly differentiate the equation
\begin{gather} \label{binomequalzero}
 \alpha\nn\rr + \beta\dd = 0 .
\end{gather}
By (\ref{dddr}), we know that $\nn=d\dd/d\rr$. If we make the regular choice, then we differentiate~(\ref{binomequalzero}) with respect to~$\rr$. This gives the equation
\begin{gather*} \alpha\frac{{d}\nn}{{d}\rr}\rr + \alpha\nn + \beta\frac{{d}\dd}{{d}\rr} = 0, \end{gather*}
which (since $\nn = \DD$) is
\begin{gather*} \alpha\NN\RR + \alpha\DD + \beta\DD = 0. \end{gather*}
If we make the critical choice, then we differentiate~(\ref{binomequalzero}) with respect to $\nn$, which gives the equation
\begin{gather*} \alpha\rr + \alpha\nn\frac{{d}\rr}{{d}\nn} + \beta\frac{{d}\dd}{{d}\rr}\frac{{d}\rr}{{d}\nn} = 0, \end{gather*}
which (since $\nn = \RR$) is
\begin{gather*} \alpha\DD+ \alpha\RR\NN + \beta\RR\NN = 0. \end{gather*}

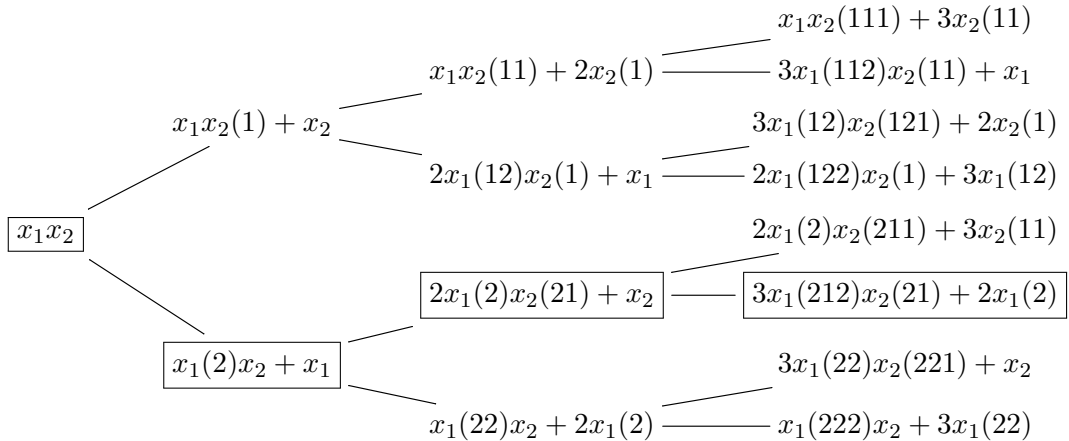
\begin{figure}[htbp]\centering
$
\xymatrix@R=0.25em{
& & & \ar@{-}[dl] x_1 x_2(111)+3x_2(11) \\
& & x_1 x_2(11) + 2x_2(1) \ar@{-}[dl] & 3x_1(112)x_2(11)+x_1 \ar@{-}[l] \\
& x_1 x_2(1) + x_2 \ar@{-}[ddl] & &3x_1(12)x_2(121)+2x_2(1) \ar@{-}[dl] \\
& & 2x_1(12)x_2(1)+x_1 \ar@{-}[ul] & 2x_1(122)x_2(1)+3x_1(12) \ar@{-}[l]\\
\boxed{x_1 x_2} & & & 2x_1(2)x_2(211)+3x_2(11) \ar@{-}[dl] \\
& & \boxed{2x_1(2)x_2(21)+x_2} \ar@{-}[dl] & \boxed{3x_1(212)x_2(21)+2x_1(2)} \ar@{-}[l] \\
& \boxed{x_1(2) x_2 + x_1} \ar@{-}[uul] & & 3x_1(22)x_2(221)+x_2 \ar@{-}[dl] \\
& & x_1(22) x_2 + 2x_1(2) \ar@{-}[ul] & x_1(222)x_2+3x_1(22) \ar@{-}[l] \\
 }
$
\caption{Node binomials up to length 3. The monomial $B(\varnothing)$ and the binomials $B(2)$, $B(21)$, $B(212)$ (here enclosed in boxes) are used in Example~\ref{runexample2}.} \label{binomstolength3}
\end{figure}

\begin{Theorem}\label{yuugetheorem}
The $k$th prolongation $X(k)_0$ of the nodal curve is a chain of $2^k +1$ irreducible curves (twigs) meeting at nodes.
\begin{enumerate}\itemsep=0pt
\item[$1.$] The nodes are the points $N(p_1p_2\cdots p_k)$, arranged in lexicographic order.
\item[$2.$] The end twigs are the $k$th prolongations of $x_2=0$ and of $x_1=0$.
\item[$3.$] The interior twigs $($taken with reduced structure$)$ are prolongations of vertical curves, and thus are copies of~$\PP^1$. In each chart, there are two twigs forming a pair of coordinate axes.
\item[$4.$] In chart $\cC(p_1p_2\cdots p_k)$, the ideal $\cI(k)_0$ is generated by
\begin{gather*}
B(\varnothing), B(p_1), B(p_1p_2), B(p_1p_2p_3), \ldots, B(p_1p_2\cdots p_k).
\end{gather*}
\end{enumerate}
\end{Theorem}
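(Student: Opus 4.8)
The plan is to argue by induction on $k$, proving Theorem~\ref{yuugetheorem} together with Theorem~\ref{nodetwigmap}, since the $\PP^1$-bundle structure of the tower forces the two statements to interlock. The base case $k=0$ is the nodal curve $x_1x_2=0$ itself: the two lines $x_1=0$ and $x_2=0$ (their own $0$th prolongations) meeting at the single node $N(\varnothing)$, with $\cI(0)_0=(B(\varnothing))$. At level $k$ everything rests on part~4, from which parts~1--3 are then read off.

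For part~4 I would first establish the following \emph{family} version, also by induction: in the chart $\cC(p_1\cdots p_k)$ the ideal of $\cX(k)$ is
\[
\cI(k)=\bigl(x_1x_2-t,\ B(p_1),\ B(p_1p_2),\ \ldots,\ B(p_1\cdots p_k)\bigr).
\]
That these generators lie in $\cI(k)$ is exactly the implicit-differentiation computation recorded just before the theorem: prolonging a member $X_t$ with $t\neq0$ into this chart and differentiating $B(p_1\cdots p_{k-1})=0$ along the prolonged curve produces precisely $B(p_1\cdots p_k)=0$; and since $\cX(k)$ is the scheme-theoretic closure of its locus over $t\neq 0$ (this being the flat extension of Hartshorne's Proposition~9.8), the relations hold on all of $\cX(k)$. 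For the reverse inclusion it suffices to show that the scheme $\cY(k)$ cut out in $\cC(p_1\cdots p_k)\times T$ by these equations is flat over $T$: for then $\cY(k)$ and the open subscheme $\cX(k)\cap(\cC(p_1\cdots p_k)\times T)$ are both flat over $T$ and agree over $T\setminus\{0\}$ (there both are reduced irreducible surfaces of dimension~$2$, the left one the prolongation of the level-$(k-1)$ surface, the right one visibly so from the elimination below), and a flat family over $T$ is determined by its restriction over $T\setminus\{0\}$; hence they coincide, the chartwise descriptions glue, and $\cI(k)$ is as asserted. Restricting to $t=0$ and using flatness of $\cX(k)$ then gives $\cI(k)_0=(B(\varnothing),B(p_1),\ldots,B(p_1\cdots p_k))$.

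The flatness of $\cY(k)$ is the crux, and I would extract it from a triangular structure of the node binomials. Following the recursion, for $j\geq 1$ the binomial $B(p_1\cdots p_j)$ has the shape $a\,\nn_j\rr_j+b\,\dd_j$ with $a$ and $b$ positive integers (the pair begins at $(1,1)$ and moves through the Stern--Brocot tree under the regular and critical steps), so in characteristic $0$ the equation $B(p_1\cdots p_j)=0$ solves for the deactivated coordinate $\dd_j$. One checks that $\dd_1,\dots,\dd_k$ are pairwise distinct and that, together with the level-$k$ active coordinates $\nn_k,\rr_k$, they are exactly the $k+2$ coordinates on $\cC(p_1\cdots p_k)$; processing $B(p_1\cdots p_k),B(p_1\cdots p_{k-1}),\ldots,B(p_1)$ in that order therefore eliminates $\dd_k,\dd_{k-1},\ldots,\dd_1$, expresses $x_1$ and $x_2$ as monomials in $\nn_k,\rr_k$, and turns $x_1x_2-t$ into a relation expressing $t$ as a nonzero scalar times $\nn_k^{e}\rr_k^{f}$ with $e,f$ positive. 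Hence the coordinate ring of $\cY(k)$ in this chart is the polynomial ring $\kk[\nn_k,\rr_k]$, on which $t$ is a nonzerodivisor, so $\cY(k)$ is flat over $T$. I expect this to be the real obstacle: making certain the binomials triangularize with invertible leading coefficients and that nothing is left over, i.e.\ that $(x_1x_2-t,B(p_1),\ldots,B(p_1\cdots p_k))$ is already saturated with respect to $t$, so that no spurious component or embedded point materializes over $t=0$.

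Parts~1--3 are then read off from part~4. Setting $t=0$ in the presentation just obtained, $X(k)_0$ meets $\cC(p_1\cdots p_k)$ in the subscheme cut out by $\nn_k^{e}\rr_k^{f}=0$ together with the vanishing of the eliminated coordinates; its reduced locus is the pair of coordinate axes $\{\nn_k=0\}$ and $\{\rr_k=0\}$ through the origin $N(p_1\cdots p_k)$, which is the pair of axes in part~3. On each axis all but one coordinate vanishes, so the axis is a vertical fiber when the surviving coordinate is the newest one, and otherwise a prolongation of such a fiber born at an earlier level; in either case a copy of $\PP^1$, except that in the charts $\cC(1\cdots1)$ and $\cC(2\cdots2)$ one of the two axes is instead the $k$th prolongation of $x_2=0$, respectively $x_1=0$, which is part~2. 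Finally, gluing the $2^k$ charts---using $\pi_k(N(p_1\cdots p_k))=N(p_1\cdots p_{k-1})$ and the inductive description of $X(k-1)_0$ to identify the axis shared by two neighboring charts---shows that the twigs form a single chain whose nodes are exactly the $2^k$ points $N(p_1\cdots p_k)$ in lexicographic order, hence $2^k+1$ twigs in all; this is part~1. The bookkeeping needed to match each axis with its neighbor, so as to see that the chain is connected and linearly ordered, is routine.
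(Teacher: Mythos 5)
Your proposal is correct and follows the same overall strategy as the paper: the node binomials are produced by implicit differentiation and hence lie in $\cI(k)$, the ideal they generate is shown to define a family flat over $T$, and uniqueness of the flat extension (Hartshorne, Proposition~9.8) then forces equality, after which parts~1--3 are read off from the explicit presentation. The one genuine difference is where the elimination is performed. The paper first establishes the two components of (the candidate for) $X(k)_0$ in each chart by induction, then invokes the complete-intersection property ($k+2$ variables, $k+1$ relations, all components one-dimensional, hence no embedded components) to justify reducing the scheme structure to the completion at the origin, and only there eliminates $\DD$ level by level to reach $\kk[[\nn,\rr,t]]/\langle\nn^{\alpha}\rr^{\alpha+\beta}-t\rangle$. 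You instead observe that the binomials are globally triangular in the chart's polynomial ring --- each $B(p_1\cdots p_j)$ solves for the distinct deactivated coordinate $\dd_j$ with an invertible coefficient, and $\dd_1,\ldots,\dd_k,\nn_k,\rr_k$ exhaust the $k+2$ coordinates --- so the chart's coordinate ring collapses to $\kk[\nn_k,\rr_k]$ with $t$ mapping to a nonzero multiple of $\nn_k^{e}\rr_k^{f}$. This buys you flatness and the central-fiber scheme structure in one stroke, without the embedded-component discussion, and it is a legitimate streamlining since the substitutions are honest polynomial identities, not merely formal ones. Conversely, you defer as ``routine'' the adjacency bookkeeping that the paper carries out in detail: identifying the point at infinity of the retained twig in $\cC(p_1\cdots p_{j-1}p_j\overline{p_j}\cdots\overline{p_j})$ as the lexicographically adjacent node $N(p_1\cdots p_{j-1}\overline{p_j}p_j\cdots p_j)$, which is what actually makes the twigs link into a single chain ordered lexicographically; this step is elementary but not entirely free, and a complete write-up would need the paper's case analysis on the string $p_1\cdots p_k$ (or an equivalent).
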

\begin{Theorem} \label{nodetwigmap} When the map $\pi_k\colon S(k)\to S(k-1)$ is applied to $X(k)_0$, one of the two twigs meeting at $N(p_1p_2\cdots p_k)$ is collapsed to the point $N(p_1p_2\cdots p_{k-1})$; the other is mapped isomorphically to a twig at level $k-1$.
\end{Theorem}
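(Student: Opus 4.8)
The plan is to work one coordinate chart at a time. Fix a node $N=N(p_1p_2\cdots p_k)$. By the Lemma of Section~\ref{charts} the only chart over the origin of $U$ containing $N$ is $\cC(p_1p_2\cdots p_k)$, and $N$ is its origin, so both twigs of $X(k)_0$ through $N$ are seen there. By Theorem~\ref{yuugetheorem}(3) these two twigs form a pair of coordinate axes near $N$; inspecting the generators $B(\varnothing),B(p_1),\dots,B(p_1p_2\cdots p_k)$ of $\cI(k)_0$ from Theorem~\ref{yuugetheorem}(4) — solving successively for the deactivated coordinate in $B(p_1),\dots,B(p_1p_2\cdots p_{k-1})$, each of whose $\dd$-coefficient is a positive integer by an immediate induction on the recursion of Section~\ref{schemestruc} — identifies those two axes as the axis $V$ of the new active coordinate $\nn=x_{\overline{p_k}}(p_1p_2\cdots p_k)$ and the axis $H$ of the retained active coordinate $\rr=x_{p_k}(p_1p_2\cdots p_k)=x_{p_k}(p_1p_2\cdots p_{k-1})$. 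The crucial observation is that in these coordinates the projection $\pi_k$ simply forgets $\nn$; in particular $V$ is the affine part of the $\PP^1$-fiber of $\pi_k$ through $N$, and $\nn$ vanishes identically on $H$.

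First I would dispose of $V$. Since $\pi_k$ drops $\nn$, it is constant along the line $V$, with value $N(p_1p_2\cdots p_{k-1})$. The twig $V$ — the irreducible component of $X(k)_0$ whose germ at $N$ is this line — is irreducible, and $\pi_k$ is constant on a neighborhood of $N$ in it, hence constant on all of $V$; so $\pi_k$ collapses $V$ to the point $N(p_1p_2\cdots p_{k-1})$. (One then checks that $V$ is in fact the entire $\PP^1$-fiber over that point, an interior twig, in accordance with Theorem~\ref{yuugetheorem}(3).)

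Then I would treat $H$. Because $\nn$ vanishes identically on $H$ and $\pi_k$ forgets only $\nn$, the map $\pi_k$ is injective on the affine part of $H$, hence birational onto its image, which inside $\cC(p_1p_2\cdots p_{k-1})$ is the $\rr$-axis. Now $\rr=x_{p_k}(p_1p_2\cdots p_{k-1})$ is an active coordinate at level $k-1$ — the retained one when $p_{k-1}=p_k$ and the new one when $p_{k-1}\ne p_k$ — so Theorem~\ref{yuugetheorem}(3),(4) at level $k-1$ shows that near $N(p_1p_2\cdots p_{k-1})$ this axis is the germ of one of the two twigs of $X(k-1)_0$ meeting there; call it $T'$. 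Then $\pi_k(H)$ is irreducible and contains a dense subset of $T'$, so $\pi_k$ maps $H$ onto $T'$; and since $H$ is the prolongation of $T'$ and $T'$ is nonsingular (Theorem~\ref{yuugetheorem}(2),(3)), the map $\pi_k\colon H\to T'$ — the prolongation map of a nonsingular curve — is an isomorphism. As $V$ and $H$ are exactly the two twigs at $N$, this is the assertion of the theorem; the case $k=3$ is depicted in Fig.~\ref{thirdprolongation}.

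The calculations are routine once Theorem~\ref{yuugetheorem} is available; I expect the only delicate point to be bookkeeping — matching the Kumpera--Ruiz coordinates on $\cC(p_1p_2\cdots p_k)$ with those on $\cC(p_1p_2\cdots p_{k-1})$ across the regular/critical dichotomy, so as to recognize $\rr$ as an active coordinate one level down — together with the routine upgrade from a generically injective morphism to an isomorphism via nonsingularity of the twigs. Since the paper proves Theorems~\ref{yuugetheorem} and~\ref{nodetwigmap} at once, in practice the local normal form of the first paragraph is extracted in the course of proving Theorem~\ref{yuugetheorem} rather than redone here.
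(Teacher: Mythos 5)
Your argument is correct and is essentially the paper's: both proofs identify the two twigs at $N(p_1p_2\cdots p_k)$, inside the single chart $\cC(p_1p_2\cdots p_k)$, as the $\nn$-axis and the $\rr$-axis cut out by the node binomials, and then observe that $\pi_k$ simply forgets $\nn$, so the $\nn$-axis collapses to $N(p_1p_2\cdots p_{k-1})$ while the $\rr$-axis maps onto the level-$(k-1)$ twig whose affine coordinate is $\rr=x_{p_k}(p_1p_2\cdots p_{k-1})$. The only real difference is organizational: the paper establishes all of this inside the simultaneous induction proving Theorem~\ref{yuugetheorem}, pinning down the retained twig's point at infinity by an explicit case analysis on the string $p_1p_2\cdots p_k$, whereas you take Theorem~\ref{yuugetheorem} as given and handle that point by the standard fact that the lift of a nonsingular curve projects isomorphically --- which works, though your assertion that $H$ \emph{is} the prolongation of $T'$ deserves the one-line check that the new coordinate $\NN=d\DD/d\RR$ vanishes on that lift because $\DD\equiv 0$ along $T'$.
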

We call the twig that collapses to $N(p_1p_2\cdots p_{k-1})$ the \emph{emergent twig}, and we say that it \emph{emerges at level $k$}. The other twig is called the \emph{retained twig}. As we will see in the proof, the retained coordinate $\rr$ is the affine coordinate for the retained twig, and the new coordinate $\nn$ is the affine coordinate for the emergent twig.
\begin{proof}[Proof of Theorems \ref{yuugetheorem} and \ref{nodetwigmap}]
The prolongation calculations described just before the statement of Theorem \ref{yuugetheorem} produce binomials that must belong to the ideal $\cI(k)$; the process begins with $B(\varnothing)-t$. Indeed, at each nonsingular point of each member of the family, these are the calculations
for determining the higher-order curvilinear data. In chart $\cC(p_1p_2\cdots p_k)$, these calculations produce the sequence
\begin{gather*}
B(\varnothing)-t, B(p_1), B(p_1p_2), B(p_1p_2p_3), \ldots, B(p_1p_2\cdots p_k).
\end{gather*}
We will analyze the ideal~$\cB$ generated by this sequence and its corresponding family of schemes. We will prove that the central member
of this family satisfies the properties listed in parts (1) through (3) in the statement of Theorem~\ref{yuugetheorem}. We know that $\cB\subset\cI(k)$, but ultimately we will show that $\cB=\cI(k)$.

To begin, we claim that, in our selected chart $\cC(p_1p_2\cdots p_k)$, the central member of the family defined by $\cB$ has two irreducible
components
\begin{gather} \label{components}
 \begin{cases}
\nn = 0, \\
\text{each inactive coordinate} = 0,
\end{cases}
\qquad
 \begin{cases}
\rr = 0, \\
\text{each inactive coordinate} = 0.\end{cases}
\end{gather}
To see this, we use induction on $k$. By the inductive hypothesis, the ideal in the coordinate ring of $\cC(p_1p_2\cdots p_{k-1})$ generated by
\begin{gather} \label{oneleveldown}
B(\varnothing), \ldots, B(p_1p_2\cdots p_{k-1})
\end{gather}
has the two components of (\ref{components}). Now, looking in the coordinate ring of $\cC(p_1p_2\cdots p_k)$, we examine the ideal generated by~(\ref{oneleveldown}) and by
\begin{gather} \label{newgen}
B(p_1p_2\cdots p_k)=\alpha \NN \RR + \beta \DD.
\end{gather}
For any point satisfying the equations of (\ref{components}), all the inactive coordinates except $\DD$ vanish. Furthermore, one of the previous active coordinates is zero; thus, either $\DD=0$ or $\RR=0$. In the latter case, the vanishing of~(\ref{newgen}) tells us that we still have $\DD=0$; thus all inactive coordinates vanish. Again using~(\ref{newgen}), we conclude that either $\NN = 0$ or $\RR = 0$.

The second component in display (\ref{components}) is a copy of the affine line $\AAA^1$, with affine coordinate~$\nn$. The map $\pi_k \colon S(k) \to S(k-1)$ forgets this coordinate; thus it collapses this component to the point $N(p_1p_2\cdots p_{k-1})$. Letting $\nn$ go to infinity takes us outside the chart, and we reach the point $N(p_1p_2\cdots p_{k-1}\overline{p_k}) \in \cC(p_1p_2\cdots p_{k-1}\overline{p_k})$. Thus there is a copy of $\PP^1$, carrying the lexicographically adjacent nodes $N(p_1p_2\cdots p_{k-1}p_k)$ and $N(p_1p_2\cdots p_{k-1}\overline{p_k})$, and $\pi_k$ collapses it to the single point $N(p_1p_2\cdots p_{k-1})$. This is the emergent twig described in the paragraph after Theorem~\ref{nodetwigmap}.

The first component in display (\ref{components}) is a copy of the affine line $\AAA^1$, with affine coordinate $\rr$. The nature of this component depends upon the nature of the string $p_1 p_2 \cdots p_k$. There are two possibilities:
\begin{enumerate}\itemsep=0pt
\item[1)] there is a $j < k$ for which the string is $p_1\cdots p_{j-1}p_j \overline{p_j}\,\overline{p_j}\cdots\overline{p_j}$;
\item[2)] the string consists entirely of $1$'s or entirely of $2$'s.
\end{enumerate}

In case (1), the coordinate
\begin{gather*}
\rr = x_{\overline{p_j}}(p_1 \cdots p_j)
\end{gather*}
is the new coordinate in chart $\cC(p_1p_2\cdots p_j)$, and at level $j$ there is an emergent twig defined by the vanishing of all other coordinates. The $k-j$ coordinates introduced in going from level $j$ to level $k$ are the coordinates
\begin{gather*}
x_{p_j}(p_1 \cdots p_j \overline{p_j}) = \frac{dx_{p_j}(p_1 \cdots p_j)}{dx_{\overline{p_j}}(p_1 \cdots p_j)}, \\
x_{p_j}(p_1 \cdots p_j \overline{p_j} \, \overline{p_j}) = \frac{dx_{p_j}(p_1 \cdots p_j\overline{p_j})}{dx_{\overline{p_j}}(p_1 \cdots p_j)}, \\
x_{p_j}(p_1 \cdots p_j \overline{p_j} \, \overline{p_j} \, \overline{p_j})= \frac{dx_{p_j}(p_1 \cdots p_j \overline{p_j} \, \overline{p_j})}{dx_{\overline{p_j}}(p_1 \cdots p_j)}, \qquad \text{etc.}
\end{gather*}
obtained by differentiation with respect to $\rr$. Since $x_{p_j}(p_1 \cdots p_j)$ vanishes on this twig, each of these coordinates vanishes on its prolongation. Thus the first component of~(\ref{components}) is part of the $(k-j)$th prolongation of this twig. This emergent twig also appears
in chart $\cC(p_1p_2\cdots p_{j-1} \overline{p_j})$, in which the new coordinate is the reciprocal
\begin{gather*}
1/\rr = x_{p_j}(p_1 \cdots p_{j-1} \overline{p_j}).
\end{gather*}
In chart $\cC(p_1p_2\cdots p_{j-1} \overline{p_j} p_j \cdots p_j)$ of level $k$, the newly-introduced coordinates are
\begin{gather*}
x_{\overline{p_j}}(p_1 \cdots p_{j-1} \overline{p_j} p_j) = \frac{dx_{\overline{p_j}}(p_1 \cdots p_{j-1}\overline{p_j})}{dx_{p_j}(p_1 \cdots p_{j-1}\overline{p_j})}, \\
x_{\overline{p_j}}(p_1 \cdots p_{j-1} \overline{p_j} p_j p_j) = \frac{dx_{\overline{p_j}}(p_1 \cdots p_{j-1}\overline{p_j}p_j)}{dx_{p_j}(p_1 \cdots p_{j-1}\overline{p_j})}, \\
x_{\overline{p_j}}(p_1 \cdots p_{j-1} \overline{p_j} p_j p_j p_j)= \frac{dx_{\overline{p_j}}(p_1 \cdots p_{j-1}\overline{p_j}p_j p_j)}{dx_{p_j}(p_1 \cdots p_{j-1}\overline{p_j})}, \qquad \text{etc.}
\end{gather*}
Again, since the coordinate function $x_{\overline{p_j}}(p_1 \cdots p_{j-1}\overline{p_j})$ vanishes on the twig, each of these coordinates vanishes on its prolongation. Thus we see that, if we let $\rr$ go to infinity on the first component of~(\ref{components}), we reach the point $N(p_1p_2\cdots p_{j-1} \overline{p_j} p_j \cdots p_j)$; this is the lexicographically adjacent node.

We now consider case (2). If the string $p_1 p_2 \cdots p_k$ consists entirely of 1's, then the first component of (\ref{components}) is the $k$th prolongation of $x_2=0$; if it consists entirely of 2's, then the first component is the $k$th prolongation of $x_1=0$.

We now observe that the central member is a complete intersection: its coordinate ring is generated by $k+2$ indeterminates, subject to $k+1$ relations, and we have just seen that each component is one-dimensional. Thus there are no embedded components, and one can deduce the scheme structure of the central member simply by localizing and completing at the origin. Carrying out this localization, we now claim that the coordinate ring of the family (when localized and completed) is isomorphic to
\begin{gather*}
\frac{\kk[[\nn, \rr,t]]}{\langle\nn^{\alpha}\rr^{\alpha+\beta}-t\rangle}.
\end{gather*}
Again the argument is by induction on $k$. Suppose that we have established this isomorphism at level $k-1$. If we make the regular choice, then at level $k$ our ring is isomorphic to
\begin{gather*}
\frac{\kk[[\DD, \RR,t,\NN]]} {\langle\DD^{\alpha}\RR^{\alpha+\beta}-t, \alpha \NN\RR + (\alpha+\beta) \DD\rangle}
\end{gather*}
and we can eliminate $\DD$ from the presentation to obtain
\begin{gather*}
\frac{\kk[[\NN, \RR,t]]} {\langle\NN^{\alpha}\RR^{\alpha+(\alpha+\beta)}-t\rangle}
\end{gather*}
as required. If we make the critical choice, then at level $k$ our ring is isomorphic to
\begin{gather*}
\frac{\kk[[\RR, \DD,t,\NN]]} {\langle\RR^{\alpha}\DD^{\alpha+\beta}-t,(\alpha+\beta)\NN\RR + \alpha \DD\rangle}
\end{gather*}
and elimination of $\DD$ leads to
\begin{gather*}
\frac{\kk[[\NN, \RR,t]]} {\langle\NN^{\alpha+\beta}\RR^{\alpha+(\alpha+\beta)}-t\rangle} .
\end{gather*}

In this presentation of our (localized and completed) ring, it is clear that $t$ is not a zero divisor. Thus the family is flat. By uniqueness of the flat extension \cite[Proposition 9.8]{MR0463157}, we have $\cB=\cI(k)$. Specializing to $t=0$, we obtain the statement of part (4) of the theorem.
\end{proof}

Note that the image of an interior twig under $\pi_k$ is determined by the images of its two nodes: if the nodes map to the same node, then the entire twig is mapped to this node as well; otherwise it maps to the twig determined by the two image nodes. Furthermore, every interior twig
emerges at some level: one twig emerges from the initial node at level~1, two twigs then emerge at level 2, four at level 3, etc. Theorem~\ref{nodetwigmap} provides a natural way to label an interior twig on the $k$th prolongation of the nodal curve, as follows.
If the twig emerges at this level, then give it the label of the node from which it emerges; in other words, the twig emerging from $N(p_1\cdots p_{k-1})$ is $T(p_1\cdots p_{k-1})$. If the twig maps isomorphically to a twig at level $k-1$, then continue to use the label of the image twig; of course this means that the length of the label reflects the level at which the twig emerges. Concretely, the label of a twig is obtained from the
labels of its nodes by using the longest possible common initial string. Refer to Fig.~\ref{thirdprolongation} to see how this works at level~3.

\begin{Example} \label{runexample2}
The boxes in Fig.~\ref{binomstolength3} show the node binomials that define the third prolongation of the nodal curve in chart~$\cC(212)$. As described in (\ref{components}), the two twigs meeting at $N(212)$ are the retained twig $T(2)$:
\begin{gather*}\begin{cases}
x_1(212) = 0, \\
x_1 = x_2 = x_1(2) = 0,\end{cases}
\end{gather*}
for which the affine coordinate is $\rr=x_2(21)$, and the emergent twig $T(21)$:
\begin{gather*}\begin{cases}
x_2(21) = 0, \\
x_1 = x_2 = x_1(2) = 0,\end{cases}
\end{gather*}
for which the affine coordinate is $\nn=x_1(212)$. The point at infinity on the retained twig (i.e., the point at which $\rr$ becomes infinite) is the node $N(221)$; here $x_2(221)=0$. The point at infinity on the emergent twig is the node $N(211)$; here $x_2(211)=0$.
\end{Example}

\section{Cycle structure} \label{cyclestruc}
Here we work out the cycle class of $X(k)_0$, in other words, we deduce the multiplicities of the twigs in the chain of curves.
\begin{Theorem}\label{multiplicity}
In chart $\cC(p_1p_2\cdots p_k)$, the node binomial $B(p_1p_2\cdots p_k)=\alpha \nn\rr + \beta \dd$ determines the multiplicities of the two twigs:
the multiplicity of the retained twig is $\alpha$, and the multiplicity of the emergent twig is $\alpha+\beta$.
\end{Theorem}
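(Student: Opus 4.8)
The plan is to pass from the cycle $Z(k)_0$ to the underlying scheme $X(k)_0$. By the standard continuity of cycles in a flat family of curves over a smooth base --- the same package that underlies the appeal to \cite[Proposition~9.8]{MR0463157} in the proof of Theorem~\ref{yuugetheorem} --- the class $Z(k)_0$ is the cycle $[X(k)_0]$ whose coefficient on each twig $T$ is the geometric multiplicity of $T$, i.e.\ the length of the local ring of the scheme $X(k)_0$ at the generic point of $T$. So it suffices to compute this length for the two twigs meeting at $N(p_1p_2\cdots p_k)$.

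Since the geometric multiplicity of a component is local along it, I would compute it in the chart $\cC(p_1p_2\cdots p_k)$ near the node $N(p_1p_2\cdots p_k)$, which lies on both of these twigs. The proof of Theorem~\ref{yuugetheorem} already records the needed local model: the completed local ring of $X(k)_0$ at that node is $\widehat{\cO}_{X(k)_0,\,N(p_1\cdots p_k)} \cong \kk[[\nn,\rr]]/\langle\nn^{\alpha}\rr^{\alpha+\beta}\rangle$, where $\rr$ is the affine coordinate of the retained twig, $\nn$ is that of the emergent twig, and $\alpha,\beta$ are the coefficients appearing in $B(p_1p_2\cdots p_k)=\alpha\nn\rr+\beta\dd$. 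In this model the retained twig is the subscheme $\{\nn=0\}$ and the emergent twig is $\{\rr=0\}$; each is analytically irreducible at the node, since the corresponding quotients are $\kk[[\rr]]$ and $\kk[[\nn]]$.

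Next I would localize this local model at the minimal prime of each twig. Localizing at $\langle\nn\rangle$ inverts $\rr$, so $\rr^{\alpha+\beta}$ becomes a unit and the defining relation collapses to $\nn^{\alpha}=0$; the result is $\kk((\rr))[\nn]/\langle\nn^{\alpha}\rangle$, an Artinian local ring of length $\alpha$. Symmetrically, localizing at $\langle\rr\rangle$ gives $\kk((\nn))[\rr]/\langle\rr^{\alpha+\beta}\rangle$, of length $\alpha+\beta$. Because completion is faithfully flat and each twig is analytically irreducible at the node --- so the generic point of the twig has a single preimage in the completed local ring, with reduced fibre --- these lengths coincide with the geometric multiplicities of the retained and emergent twigs, which are therefore $\alpha$ and $\alpha+\beta$, as claimed.

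The one point requiring care is this last comparison: that the length computed in the completed local ring at the node equals the multiplicity at the (non-closed) generic point of the twig. This is exactly where analytic irreducibility of the twigs enters; once it is in hand, the comparison is the routine behaviour of length under a faithfully flat local homomorphism whose closed fibre is a field. If one prefers to avoid completion entirely, one can instead localize the complete-intersection ideal $\langle B(\varnothing),B(p_1),\ldots,B(p_1\cdots p_k)\rangle$ directly at the generic point of a twig and eliminate the deactivated and inactive coordinates there; this arrives at the same relation $\nn^{\alpha}=0$ (respectively $\rr^{\alpha+\beta}=0$) and makes the multiplicity count a one-line calculation.
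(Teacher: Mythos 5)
Your proposal is correct and follows essentially the same route as the paper: the paper's proof consists precisely of citing the local model $\kk[[\nn,\rr,t]]/\langle\nn^{\alpha}\rr^{\alpha+\beta}\rangle$ established in the proof of Theorem~\ref{yuugetheorem} and reading off the multiplicities, which is what you do by localizing at the minimal primes $\langle\nn\rangle$ and $\langle\rr\rangle$. Your added care about passing between lengths in the completed local ring and the geometric multiplicities at the generic points of the twigs fills in a step the paper leaves implicit, but it is the same argument.
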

\begin{proof} By our arguments in the proof of Theorem \ref{yuugetheorem}, we know that at each node the (localized and completed) coordinate ring for $X(k)_0$ is isomorphic to
\begin{gather*}
\frac{\kk[[\nn, \rr,t]]} {\langle\nn^{\alpha}\rr^{\alpha+\beta}\rangle}.\tag*{\qed}
\end{gather*}\renewcommand{\qed}{}
\end{proof}

\begin{Example} Continuing the running example, we refer to the rightmost box of Fig.~\ref{binomstolength3}. The retained twig $x_1=x_2=x_1(2)=x_1(212)=0$ has multiplicity~3. The emergent twig $x_1=x_2=x_1(2)=x_2(21)=0$ has multiplicity~5.
\end{Example}

One can also generate these multiplicities without invoking the node binomials. The \emph{twig multiplicity sequence} gives the multiplicities from left to right. It is generated recursively, beginning with $m_0$, the sequence~1,~1. The sequence $m_{k}$ is obtained from $m_{k-1}$ by inserting between each pair of consecutive terms a new term whose value is their sum, so that it has $2^k+1$ terms altogether. Thus we have

\smallskip

 \begin{tabular}{cc}
$m_0$: & 1 , 1 \\
$m_1$: & 1 , 2 , 1 \\
$m_2$: & 1 , 3 , 2 , 3 , 1 \\
$m_3$: & 1 , 4 , 3 , 5 , 2 , 5 , 3 , 4 , 1
 \end{tabular}

\smallskip

\noindent
and so on.

\section{Stratification} \label{strat}
Together with Castro and Shanbrom, we have worked out in \cite{MR3720327} the details of a natural coarse stratification of the monster spaces. We briefly recall that theory here, specializing to the case where the base is a surface. The basic idea is to apply the construction of Section~\ref{monster} in a different way. Assuming that $k \geq 2$, we begin with $M=S(k-1)$ and $\cB$ the bundle of vertical vectors at level $k-1$. This is a rank 1 bundle, and thus $\widetilde{M}$ is a copy of $S(k-1)$ inside $S(k)$; moreover it is a divisor meeting each $\PP^1$-fiber in one point. We call $\widetilde{M}$ the \emph{divisor at infinity} and denote it by $I_k$. Iterating the process, we obtain subspaces $I_k[1]$ in $S(k+1)$, then $I_k[2]$ in $S(k+2)$, etc. (By convention $I_k[0]=I_k$.) Each $I_k[n]$ is a copy of $S(k-1)$, having codimension $n+1$ inside $S(k+n)$. We remark that if we prolong the fiber of $S(k-1)$ over a point of $S(k-2)$, i.e., a~vertical curve then its successive prolongations lie on these $I_k[n]$'s.

In \cite{MR3720327}, we examined the \emph{intersection locus}
\begin{gather} \label{intloceq}
I_W = \bigcap_{j=2}^k I_j[n_j-1],
\end{gather}
a subspace of $S(k)$. We will explain the notation on the left side of (\ref{intloceq}) in a moment. To make sense of the notations on the right side, we need to make several remarks. First, each~$n_j$ is a nonnegative integer with $n_j \leq k-j+1$. The space $I_j[n_j-1]$ has been defined to be a~subspace of $S(n_j+j-1)$; in (\ref{intloceq}) we use its complete inverse image in $S(k)$. We have already remarked that $I_j[0]$ means $I_j$, and we interpret $I_j[-1]$ as $S(k)$ itself (so that this intersectand could be omitted). With these conventions, the intersection locus of~(\ref{intloceq}), if it is not empty, is nonsingular and of codimension $n_2+n_3+\cdots +n_k$.

To understand when these loci are nonempty, we work with certain \emph{code words}
\begin{gather} \label{genericword}
W=V_{A_1}V_{A_2}V_{A_3}\cdots V_{A_k}
\end{gather}
created according to the following rules:
\begin{enumerate}\itemsep=0pt
\item[1)] the first subscript $A_1$ must be $\varnothing$;
\item[2)] for $j \geq 2$, the subscript $A_j$ must be $\varnothing$ or $j$ or the prior subscript $A_{j-1}$.
\end{enumerate}
The symbol $V_{\varnothing}$ is also denoted $R$; in particular the first symbol is always $R$. The number of code words of length $k$ is the Fibonacci number $F_{2k-1}$ (where $F_1=F_2=1$). Let $n_j$ denote the total number of times that $j$ appears in the subscripts of $W$; then the intersection locus $I_W$ specified by~(\ref{intloceq}) is nonempty, and this method accounts for all nonempty intersection loci.
\begin{Example} \label{5strata} There are five code words of length three, and thus five associated intersection loci on $S(3)$:
\begin{gather*}
RRR\longleftrightarrow S(3) = I_2[-1] \cap I_3[-1], \\
RRV_3 \longleftrightarrow I_3 = I_2[-1] \cap I_3[0], \\
RV_2R \longleftrightarrow I_2 = I_2[0] \cap I_3[-1], \\
RV_2V_2 \longleftrightarrow I_2[1] = I_2[1] \cap I_3[-1], \\
RV_2V_3 \longleftrightarrow I_2 \cap I_3 = I_2[0] \cap I_3[0].
\end{gather*}
\end{Example}

The intersection loci of (\ref{intloceq}) are nested in a straightforward way: given code words~$W'$ and~$W$ whose respective associated integers satisfy $n'_j \geq n_j$ (for each $j=2,\ldots,k$), the intersection locus~$I_{W'}$ is contained in~$I_W$. Removing all the lower-dimensional loci from an intersection locus gives a \emph{code word stratum}, an open dense subset of the intersection locus. In this manner we have stratified~$S(k)$, as in~\cite{MR3720327}. In that paper we also gave equations defining each intersection locus in each chart; all of them are linear equations in the local coordinates. Each stratum is obtained by removing intersection loci of lower dimension; thus in each chart, each stratum is given by certain linear equations and linear inequalities in the local coordinates. For present purposes the detailed structure of these equations and inequalities is not needed.

Now recall our discussion in Section~\ref{monster} of the difference in viewpoints between the study of Goursat distributions and the study of curvilinear data. From the former perspective, one cannot distinguish, at level $2$, between regular and vertical data. Thus in the associated literature there is no possibility of using the symbol~$V_2$; indeed, often the code words in~(\ref{genericword}) are truncated by removing the first two symbols. To say this another way, the choice of vertical directions at level~$1$ is somewhat arbitrary, and consequently the divisor at infinity $I_2$ is not well-defined. Thus the stratification of a monster space is, with this viewpoint, slightly coarser than what we are using in this paper.

Our earliest studies of the monster (Semple) tower \cite{MR1100358,MR1143546, MR1287696} were focused on deriving formulas in enumerative geometry for algebraic curves in the projective plane; thus we were concerned with orbits of the monster spaces under the action of the projective linear group ${\rm PGL}(2)$. In this context, one wants an even finer stratification, in which, at level~$2$, there are three sorts of points:
\begin{enumerate}\itemsep=0pt
\item[1)] a general point of $\PP^2(2)$ represents the curvilinear data of a nondegenerate conic at a~point;
\item[2)] there is still a divisor at infinity, whose points represent the data of certain singular curves;
\item[3)] there is a second divisor whose points represents the data of lines, and this is a distinguished locus invariant under the action of the group.
\end{enumerate}
From this viewpoint, there are three possible symbols for the second entry of a code word. In our cited works, the symbols were $0$, $-$, and $\infty$; to pass to the code word of the present paper, replace $0$ or $-$ by $R$, and replace $\infty$ by $V_2$. In this finer stratification, there are eight strata at level~$3$, each of which happens to be a ${\rm PGL}(2)$-orbit, as enumerated in \cite[Theorem~2]{MR1143546}, rather than the five strata listed in Example~\ref{5strata} above or the two strata of the differential-geometric theory.

We now return to our analysis of the family acquiring a node.
\begin{Lemma}Every point of a twig belongs to the same stratum, except possibly its nodes.
\end{Lemma}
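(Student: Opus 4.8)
The plan is to analyze each twig in the chart (or pair of charts) that contains it, using the explicit coordinate description of the interior twigs and the end twigs together with the linear equations and inequalities that cut out the code-word strata. Recall from Theorem~\ref{yuugetheorem} that an interior twig, taken with its reduced structure, is a pair of coordinate axes in some chart $\cC(p_1p_2\cdots p_k)$: it is given by setting all but one coordinate equal to zero, with the remaining coordinate (either $\nn$ or $\rr$) serving as the affine parameter. The end twigs are prolongations of $x_2=0$ or $x_1=0$, and appear in the charts $\cC(11\cdots 1)$ and $\cC(22\cdots 2)$ with a similar coordinate-axis description. The key point is that, along such an axis, the only coordinate that varies is the affine parameter.

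First I would fix a twig and choose a chart $\cC(p_1p_2\cdots p_k)$ in which it appears as a coordinate axis, say $\{$ all coordinates zero except $\rr\}$ (the argument for the $\nn$-axis is identical). Next I would recall from Section~\ref{strat} that, in this chart, each intersection locus $I_W$ is cut out by certain linear equations in the local coordinates, and each code-word stratum is obtained from $I_W$ by imposing, in addition, linear inequalities (i.e., the complements of the lower-dimensional loci, each again linear). Then I would examine which of these linear equations and inequalities involve the affine parameter $\rr$ and which do not. A point on the twig has the form $(0,\dots,0,\rr,0,\dots,0)$; the question of which stratum it belongs to is decided by evaluating each relevant linear form at such a point. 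If a linear equation defining some $I_W$ does not involve $\rr$, it either holds identically along the twig or fails identically; if it does involve $\rr$, it holds at only finitely many values of $\rr$ — but I would argue that the only such special values correspond to the nodes (where $\rr=0$ or $\rr=\infty$), so that on the open part of the twig the membership in each $I_W$ is constant. The same reasoning applies to the inequalities. Hence the stratum is locally constant on the complement of the nodes, and since this complement is irreducible (it is $\mathbb A^1$ minus at most a point, or $\PP^1$ minus two points), the stratum is constant there.

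The main obstacle I expect is the bookkeeping needed to show that no defining linear form for an intersection locus $I_j[n_j-1]$ cuts the twig transversally at an interior (non-nodal) point — in other words, that the only places where a twig can jump strata are exactly its nodes. This requires knowing the precise linear equations for the $I_j[n_j-1]$'s in each Kumpera--Ruiz chart, as worked out in \cite{MR3720327}, and checking them against the coordinate description of the twigs in Theorem~\ref{yuugetheorem}. I would handle this by observing that, in the chart $\cC(p_1p_2\cdots p_k)$, the coordinate $x_{p_k}(p_1\cdots p_k)=\dd$ (or $\nn$, depending on the case) that "points to infinity" along the twig is precisely the coordinate whose vanishing or reciprocal appears in the divisor-at-infinity equations relevant at the top level; all lower-level divisor equations involve only coordinates from earlier levels, all of which are identically zero on the twig. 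Thus along the interior of the twig the divisor equations are either identically satisfied or identically violated, with the sole borderline case occurring as $\rr\to 0$ or $\rr\to\infty$, i.e., at a node. This pins down the claim.
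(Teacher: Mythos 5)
Your proposal is correct and follows essentially the same route as the paper: the paper's entire proof is the observation that each twig is a coordinate axis in some chart (with one node at the origin and the other at infinity), combined with the fact, recalled earlier in Section~\ref{strat}, that each stratum is cut out by linear equations and inequalities in the local coordinates, so membership can only change at the two nodes. Your elaboration of why the relevant linear forms cannot vanish at an interior point of the axis is just a more detailed unpacking of the same argument.
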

\begin{proof}
There is a chart in which the entire twig appears as a coordinate axis, except that one of its nodes is at infinity.
\end{proof}

Thus to each twig we can associate the code word of its stratum, called its \emph{twig word}. Similarly, the code word of the stratum containing a node will be called its \emph{node word}. The node word of $N(p_1p_2\cdots p_k)$ will be denoted $W(p_1p_2\cdots p_k)$.

\begin{Theorem} \label{twigwordrecursion}
Suppose that $0 \leq j \leq k-1$. The twig word of $T(p_1p_2\cdots p_j)$ $($a twig at level~$k)$ consists of $W(p_1p_2\cdots p_j)$ $($a node word at level~$j)$ followed by a single $R$ and a string of $k-1-j$ occurrences of $V_{j+2}$.
\end{Theorem}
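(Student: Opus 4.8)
The plan is to prove Theorem~\ref{twigwordrecursion} by induction on~$k$, treating the two types of twig separately according to whether $j = k-1$ (the twig emerges at the top level) or $j < k-1$ (the twig is the isomorphic image of a twig already present at level $k-1$). In both cases the key is to exhibit the twig explicitly as a coordinate axis in the appropriate chart, as in the Lemma preceding the theorem, and then to read off which intersection loci $I_i[n_i-1]$ of~\eqref{intloceq} the generic point of that axis lies on, using the fact (quoted from~\cite{MR3720327}) that each such locus is cut out in each chart by linear equations in the local coordinates.

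For the base of the recursion, consider a twig $T(p_1\cdots p_j)$ that emerges at level $k = j+1$: by Theorem~\ref{nodetwigmap} and the description in display~\eqref{components}, this is the emergent twig in chart $\cC(p_1\cdots p_j\, \overline{p_{j+1}})$ or $\cC(p_1\cdots p_j\, p_{j+1})$, appearing as the axis $\rr = 0$ together with the vanishing of all inactive coordinates, with affine coordinate $\nn$. Since $\nn = x_{p_{j+1}}(p_1\cdots p_j \overline{p_{j+1}})$ is the new coordinate introduced at level $j+1$, a generic point of this axis (where $\nn \ne 0$) is \emph{not} on the divisor at infinity $I_{j+1}$, whereas all the older new-coordinate functions vanish on it, so it sits on $I_i$ for exactly those $i \le j$ already recorded in $W(p_1\cdots p_j)$. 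Translating through the dictionary between intersection loci and code words, this says the twig word is $W(p_1\cdots p_j)$ followed by a single $R$ in position $j+1$ — which is the claim with $k-1-j = 0$, so the string of $V_{j+2}$'s is empty.

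For the inductive step, suppose the result holds at level $k-1$ and let $T(p_1\cdots p_j)$ be a twig at level $k$ with $j < k-1$. By Theorem~\ref{nodetwigmap} this twig is the retained twig at some node $N(p_1\cdots p_k)$: it maps isomorphically under $\pi_k$ to the twig $T(p_1\cdots p_j)$ at level $k-1$, whose twig word is, by inductive hypothesis, $W(p_1\cdots p_j)$ followed by $R$ and by $k-2-j$ copies of $V_{j+2}$. Going from level $k-1$ to level $k$ we must determine whether the generic point of the retained twig lies on the new divisor at infinity $I_k$. Here I would use the analysis of case~(1) in the proof of Theorem~\ref{yuugetheorem}: the retained twig, being a prolongation of a vertical curve that emerged at level $j+1$, has as its affine coordinate $\rr = x_{\overline{p_{j+1}}}(p_1\cdots p_{j+1})$, and the newest coordinate of chart $\cC(p_1\cdots p_k)$ — the one whose vanishing defines $I_k$ — is one of the $x_{p_{j+1}}(\cdots)$ differentiated-with-respect-to-$\rr$ coordinates listed there, which vanishes identically on the twig. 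Hence the generic point of the retained twig \emph{does} lie on $I_k$, contributing one more $V$ symbol; and one checks from the prolongation recipe that the level at which that vertical direction was first taken is $j+2$, so the new symbol is $V_{j+2}$. Appending it to the inductive twig word yields $W(p_1\cdots p_j)$ followed by $R$ and $k-1-j$ copies of $V_{j+2}$, as required.

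The main obstacle will be step three: carefully matching the newest coordinate of chart $\cC(p_1\cdots p_k)$ against the explicit list of differentiated coordinates $x_{p_{j+1}}(p_1\cdots p_{j+1}\overline{p_{j+1}}\cdots)$ from the proof of Theorem~\ref{yuugetheorem}, and confirming both that it vanishes on the retained twig (so we really do land on $I_k$) and that the vertical direction it records is the one created at level $j+2$ rather than some other level — this is where the bookkeeping of which string-position introduced which active coordinate has to be done with care. Once that identification is pinned down, the passage between "lies on $I_k$" and "the twig word gains a $V_{j+2}$" is immediate from the code-word/intersection-locus dictionary of Section~\ref{strat}, and the induction closes.
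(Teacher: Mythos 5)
Your overall strategy --- identify the twig as an iterated prolongation of a vertical curve, exhibit it as a coordinate axis in suitable charts, and read off its stratum --- is the right one, and your base case and your treatment of the first $j+1$ symbols are essentially what the paper does. But the inductive step rests on a misreading of the code-word/intersection-locus dictionary of Section~\ref{strat}, and this is a genuine gap, not mere imprecision. You assert that the generic point of the retained twig lies on the divisor at infinity $I_k$ and that this membership ``contributes one more $V$ symbol,'' namely $V_{j+2}$. In the dictionary, however, a block of $n_i$ symbols $V_i$ occupies positions $i,i+1,\ldots,i+n_i-1$ and records membership in $I_i[n_i-1]$; lying on $I_k$ itself would therefore put $V_k$ in position $k$, not $V_{j+2}$. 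The symbol $V_{j+2}$ in position $k$ records instead that the twig lies on the deeper prolongation $I_{j+2}[k-j-2]$, extending the block that began in position $j+2$. Worse, for $k\geq j+3$ the generic point of the twig does \emph{not} lie on $I_k$ at all: the twig is covered by the two charts $\cC(p_1\cdots p_j\, a\, \overline{a}\cdots\overline{a})$ and $\cC(p_1\cdots p_j\, \overline{a}\, a\cdots a)$, whose final two letters agree, i.e., regular charts at the top level, and $I_k$ misses every regular chart (the vertical direction sits at infinity in the fiber coordinate $d\nn/d\rr$). Equivalently, no code word can have both $n_{j+2}=k-j-1\geq 2$ and $n_k=1$, so $I_{j+2}[k-j-2]\cap I_k=\varnothing$. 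Thus the ``main obstacle'' you flag cannot be resolved as you propose: the newest coordinate of the chart does vanish identically on the twig, but its vanishing (together with that of its predecessors in the chain) is an equation of $I_{j+2}[k-j-2]$, not of $I_k$. (Only in the single step $k=j+2$, where $I_{j+2}[0]=I_{j+2}=I_k$, does your statement happen to be true.)

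The repair is the paper's own, non-inductive argument, which you nearly have in hand: the composite projection $S(k)\to S(j+1)$ maps $T(p_1\cdots p_j)$ isomorphically onto the vertical curve (the $\PP^1$-fiber of $S(j+1)\to S(j)$) over $N(p_1\cdots p_j)$, so the twig at level $k$ is the $(k-1-j)$th prolongation of that vertical curve. The first $j$ symbols are those of $W(p_1\cdots p_j)$ because the entire twig lies over that node and the loci $I_i[n_i-1]$ with $i+n_i-1\leq j$ are pulled back from level $j$; the symbol in position $j+1$ is $R$ because the generic point of the fiber represents an honest (non-vertical) tangent direction at $N(p_1\cdots p_j)$; and the remaining $k-1-j$ symbols are $V_{j+2}$ because, as already remarked in Section~\ref{strat}, the successive prolongations of a vertical curve at level $j+1$ lie on the loci $I_{j+2}[n]$ appearing in \eqref{intloceq}.
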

Note that if $j=k-1$, then the final string is empty.

\begin{proof} The map from $S(k)$ to $S(j+1)$ is an isomorphism from $T(p_1p_2\cdots p_j)$ to the vertical curve over the node $N(p_1p_2\cdots p_j)\in S(j)$. (If $k=j+1$, the map is the identity.) The curvilinear data at level $j+1$ is thus regular: it represents a tangent direction at $N(p_1p_2\cdots p_j)$. Beyond this level we have the successive prolongations of this vertical curve and, as we have already observed, they lie within the spaces $I_{j+2}[n]$.
\end{proof}

\begin{Lemma} \label{nodewordrecursion} Consider the twig words of the two twigs meeting at $N(p_1p_2\cdots p_k)$. In each position, either the symbols agree, or one of the symbols is $R$ and the other is some $V_i$. The node word $W(p_1p_2\cdots p_k)$ can be deduced from these twig words as follows: in each position, if the symbols of the twig words agree, then use the common symbol; if they disagree, then use $V_i$ rather than $R$.
\end{Lemma}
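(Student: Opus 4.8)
The plan is to analyze the two twigs meeting at $N(p_1p_2\cdots p_k)$ by tracking the level at which each one emerges, and then to combine Theorems~\ref{nodetwigmap} and~\ref{twigwordrecursion} with the nesting structure of the intersection loci. First I would recall from the proof of Theorem~\ref{yuugetheorem} and the paragraph following Theorem~\ref{nodetwigmap} that the two twigs at $N(p_1p_2\cdots p_k)$ are the emergent twig $T(p_1p_2\cdots p_{k-1})$ (which emerges at level $k$) and a retained twig. By the twig-labeling convention and the case analysis in the proof of Theorem~\ref{yuugetheorem} (cases (1) and (2)), the retained twig is either $T(p_1\cdots p_{j-1}p_j)$ for some $j\le k-1$ in case (1), or an end twig in case (2); in either event it emerges at some level $\ell<k$, and its label is a proper initial substring of $p_1p_2\cdots p_k$. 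So the two twigs emerge at levels $\ell<k$ and $k$ respectively.

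Next I would apply Theorem~\ref{twigwordrecursion} to write down the two twig words explicitly. The emergent twig $T(p_1\cdots p_{k-1})$ has twig word $W(p_1\cdots p_{k-1})$ followed by a single $R$ (and no further symbols, since here $j=k-1$). The retained twig, emerging at level $\ell$ with a label $p_1\cdots p_\ell$ (in case (2), the all-ones or all-twos string; in case (1), the string $p_1\cdots p_j$, so $\ell=j$), has twig word $W(p_1\cdots p_\ell)$ followed by a single $R$ and then $k-1-\ell$ copies of $V_{\ell+2}$. Now I compare these two words position by position. In the first $\ell$ positions they are governed by $W(p_1\cdots p_{k-1})$ versus $W(p_1\cdots p_\ell)$; since $N(p_1\cdots p_\ell)=\pi_{\ell+1}\cdots\pi_{k-1}(N(p_1\cdots p_{k-1}))$, I would use the relationship between node words at different levels (again via Theorem~\ref{twigwordrecursion} applied recursively, or the explicit linear equations for the intersection loci recalled in Section~\ref{strat}) to see that the node word of the lower-level node is obtained from that of the higher-level node by truncation, so these initial segments agree. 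The genuine discrepancy appears from position $\ell+1$ onward: the emergent twig's word continues as a single $R$ in position $k$ and agreement on positions $\ell+1,\dots,k-1$ (these belonging to the $W(p_1\cdots p_{k-1})$ part), whereas the retained twig's word has $R$ in position $\ell+1$ and $V_{\ell+2}$ in positions $\ell+2,\dots,k$. I expect that in every position where they disagree, exactly one of the two symbols is $R$, matching the first assertion of the Lemma.

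The crux of the argument is the second assertion: that $W(p_1p_2\cdots p_k)$ is obtained by taking, in each disagreeing position, the $V_i$ rather than the $R$. For this I would use the nesting of intersection loci: the node $N(p_1\cdots p_k)$ lies on both twigs, hence on the closures of both strata, so its stratum is contained in the intersection-locus closure of each; concretely, for each $j$, the integer $n_j$ recording how many times $j$ appears in $W(p_1\cdots p_k)$ must be at least as large as the corresponding count for either twig word (since the node lies on the relevant $I_j[n_j-1]$'s of both twigs). Taking the pointwise maximum of the two $n_j$-vectors corresponds exactly to taking $V_i$ over $R$ in each disagreeing coordinate. To finish I would check that this maximal code word is itself a legal code word (satisfying rules (1) and (2) of Section~\ref{strat}) and that $N(p_1\cdots p_k)$ actually lies in the open stratum of the corresponding intersection locus, not in a smaller one — this last point being the main obstacle, and I would settle it by working in the chart $\cC(p_1p_2\cdots p_k)$, writing out the linear equations from \cite{MR3720327} for each intersection locus, and verifying directly that $N(p_1p_2\cdots p_k)$ satisfies precisely the equations of $I_W$ for the maximal $W$ and no more. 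Once that is confirmed, the computed maximum equals $W(p_1\cdots p_k)$ and the Lemma follows.
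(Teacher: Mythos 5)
Your strategy points in the right direction---the ``take $V_i$ over $R$'' half of the lemma does come from the closedness of the divisors at infinity and their prolongations, which is also how the paper argues---but the proposal stops short of proving the decisive step. You correctly isolate it: after placing the node in the intersection locus of the ``maximal'' code word, one must still show it lies in no deeper locus, i.e., that in every position where \emph{both} twig words have $R$ the node word also has $R$. You call this ``the main obstacle'' and propose to settle it by ``verifying directly'' against the linear equations of \cite{MR3720327}, but no such verification is performed, and it is exactly where the content of the lemma lives: the intersection loci that do meet a chart are cut out by the vanishing of coordinates, so one cannot simply observe that the node is a chart origin and be done. The paper's resolution is short but does real work: in $\cC(p_1\cdots p_k)$ the two twigs are precisely the two coordinate axes, the stratum conditions are linear equations together with linear inequalities, and it is the fact that the non-nodal points of \emph{both} axes satisfy the relevant inequalities that forces their common origin to satisfy them too. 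Some version of that argument (or an actual chart computation) is needed; as written the proof is incomplete.

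There is a second, smaller gap. The first assertion of the lemma (in each position the two symbols agree, or one is $R$ and the other some $V_i$) is only ``expected'' in your write-up, inferred from the explicit shapes of the two twig words supplied by Theorem~\ref{twigwordrecursion}. That route additionally requires the unproved claim that node words truncate under the projections $\pi_j$, which in the paper is most easily read off from the explicit block formula proved \emph{after} (and by means of) this lemma---so you risk circularity, or at least an extra induction you have not set up. The paper's argument avoids all of this machinery: if a twig word has $V_i$ in some position then closedness forces the node word to have $V_i$ there as well, and since the node carries a single well-defined code word it cannot have two distinct $V$'s in one position; hence the two twig words cannot disagree with differing $V$'s. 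I would recommend replacing the explicit comparison of twig words by this direct argument, and then supplying an actual proof of the ``both $R$'' case along the lines above.
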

\begin{proof}The divisors at infinity and their prolongations are closed loci. If the non-nodal points of a twig are contained in such a divisor or its prolongation, then so are the nodes of this twig. Thus if a symbol $V_i$ occurs in a certain position of a twig word, then the words of its nodes must have the same symbol in this position. For the same reason, the words of the two twigs meeting at a node cannot have differing $V$'s
in the same position. If both words have $R$ in some position, this means that the coordinates of the non-nodal points of both twigs satisfy a certain set of strict linear inequalities. Since each twig is a coordinate axis, the origin also satisfies these same inequalities.
\end{proof}

Using Theorem \ref{twigwordrecursion} and Lemma \ref{nodewordrecursion}, one can recursively compute any desired node word or twig word, invoking the following base cases: $W(\varnothing)=\varnothing$ and $W(11\cdots1)=W(22\cdots2)=RR\cdots R$.

\begin{Example} \label{nodewordcomputation} This continues and extends the running example. Fig.~\ref{nodewordexample} shows the computation of the node word $RV_2V_3V_3V_5$ for the node $N(21221)$. The upper tree shows the labels of the nodes to which $N(21221)$ maps, together with the twigs at these nodes; a slanted and dotted segment indicates that the twig emerges from the node shown to its left. The lower tree gives the corresponding node words and twig words, which can be deduced by working from left to right: first use Theorem~\ref{twigwordrecursion} to compute the twig words, and then use Lemma~\ref{nodewordrecursion} to compute the node word.
\end{Example}

\begin{figure}[htbp]
\begin{gather*}
\xymatrix@R=0.25em{
& T(\varnothing) \ar@{--}[r]
& T(\varnothing)
& T(21)
& T(212) \ar@{--}[r]
& T(212)
\\
\boxed{N(\varnothing)} \ar@{-}[r] \ar@{--}[ru]
& \boxed{N(2)} \ar@{-}[r] \ar@{--}[rd]
& \boxed{N(21)} \ar@{-}[r] \ar@{--}[ru]
& \boxed{N(212)} \ar@{-}[r] \ar@{--}[ru]
& \boxed{N(2122)} \ar@{-}[r] \ar@{--}[rd]
& \boxed{N(21221)}
\\
& \text{right end}
& T(2) \ar@{--}[r]
& T(2) \ar@{--}[r]
& T(2)
& T(2122)
\\
\\
\\
& R \ar@{--}[r]
& RV_2
& RV_2R
& RV_2V_3R \ar@{--}[r]
& RV_2V_3RV_5
\\
\boxed{\varnothing} \ar@{-}[r] \ar@{--}[ru]
& \boxed{R} \ar@{-}[r] \ar@{--}[rd]
& \boxed{RV_2} \ar@{-}[r] \ar@{--}[ru]
& \boxed{RV_2V_3} \ar@{-}[r] \ar@{--}[ru]
& \boxed{RV_2V_3V_3} \ar@{-}[r] \ar@{--}[rd]
& \boxed{RV_2V_3V_3V_5}
\\
& R
& RR \ar@{--}[r]
& RRV_3\ar@{--}[r]
& RRV_3V_3
& RRV_3V_3R
}
\end{gather*}
\caption{Recursive computation of the node word of the node $N(21221)$. Refer to Example \ref{nodewordcomputation} for an explanation.}\label{nodewordexample}
\end{figure}
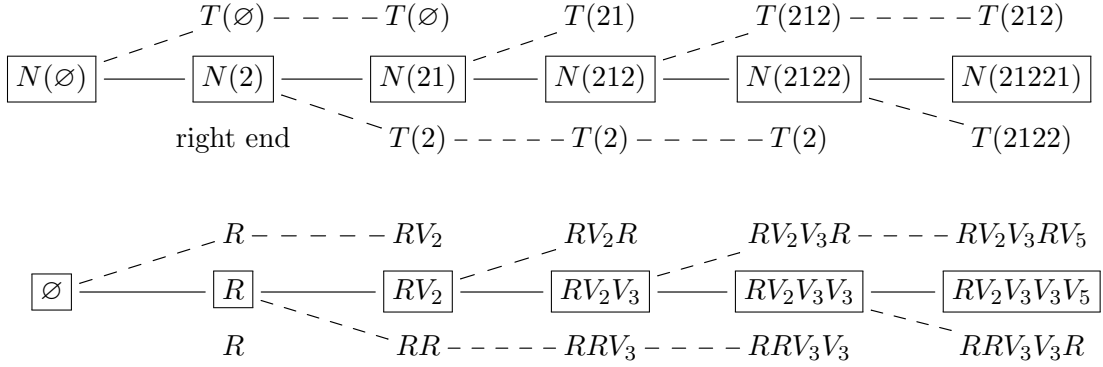

One can also use Theorem \ref{twigwordrecursion} and Lemma \ref{nodewordrecursion} to derive the following explicit formula.
\begin{Theorem} The node word $W(p_1p_2\cdots p_k)$ can be obtained from $p_1p_2\cdots p_k$ by the following process: replace its initial block of the symbol $1$ or $2$ by the same-length block of the symbol~$R$, and replace each succeeding block by the same-length block of the symbol~$V_j$, where $j$ is the beginning position of the block.
\end{Theorem}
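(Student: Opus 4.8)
The plan is to prove this closed-form description of $W(p_1p_2\cdots p_k)$ by induction on $k$, using Theorem~\ref{twigwordrecursion} and Lemma~\ref{nodewordrecursion} as the recursive engine, exactly as suggested in the text just before the statement. Let me write $p_1p_2\cdots p_k$ as a concatenation of maximal constant blocks $\beta_1\beta_2\cdots\beta_r$, where block $\beta_i$ begins at position $b_i$ (so $b_1=1$), and let the claimed node word be $\widehat{W}=RR\cdots R\,V_{b_2}\cdots V_{b_2}\,V_{b_3}\cdots V_{b_3}\cdots V_{b_r}\cdots V_{b_r}$, i.e.\ the first block of $R$'s has the length of $\beta_1$ and the $i$th block (for $i\geq2$) is $V_{b_i}$ repeated with multiplicity the length of $\beta_i$. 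The base cases are those already named in the text: $W(\varnothing)=\varnothing$, and when the string is constant ($r=1$) the formula gives $RR\cdots R$, which matches $W(11\cdots1)=W(22\cdots2)=RR\cdots R$.

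For the inductive step, assume the formula holds for all strings of length $k-1$ and consider $p_1\cdots p_k$ of length $k$. The node $N(p_1\cdots p_k)$ is a node on $X(k)_0$ where two twigs meet; by the discussion after Theorem~\ref{nodetwigmap} and the labeling convention at the end of Section~\ref{schemestruc}, one of these is the emergent twig $T(p_1\cdots p_{k-1})$ and the other is a retained twig whose label is a proper initial substring $T(p_1\cdots p_j)$ for the appropriate $j<k-1$ (concretely, from the analysis of case~(1) versus case~(2) in the proof of Theorem~\ref{yuugetheorem}, the retained twig at $N(p_1\cdots p_k)$ is $T(p_1\cdots p_j)$ where the string has the form $p_1\cdots p_j\overline{p_j}\,\overline{p_j}\cdots\overline{p_j}$ in case~(1), and is the relevant end in case~(2)). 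The strategy is then: (i) apply Theorem~\ref{twigwordrecursion} to each of these two twig labels to get their twig words, substituting the inductive formula for the shorter node words $W(p_1\cdots p_{k-1})$ and $W(p_1\cdots p_j)$ that appear; (ii) apply Lemma~\ref{nodewordrecursion} to combine the two twig words position-by-position into $W(p_1\cdots p_k)$; (iii) check that the result equals $\widehat W$.

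Carrying this out requires a careful case split on whether $p_k=p_{k-1}$ (the string ends with a block of length $\geq2$, so the last block of $\widehat W$ just grows by one $V$) or $p_k\neq p_{k-1}$ (a new block opens at position $k$, so $\widehat W$ should acquire a new final symbol $V_k$). In the first case the emergent twig $T(p_1\cdots p_{k-1})$ has twig word $W(p_1\cdots p_{k-1})$ followed by one $R$; by induction $W(p_1\cdots p_{k-1})$ is $\widehat W$ with its last symbol deleted, so this twig word is $\widehat W$'s first $k-1$ symbols followed by $R$. The retained twig is $T(p_1\cdots p_j)$ with $j$ the start of the final block minus one, and Theorem~\ref{twigwordrecursion} gives its twig word as $W(p_1\cdots p_j)$ followed by $R$ then a run of $V_{j+2}=V_{b_r}$; one checks $W(p_1\cdots p_j)$ (by induction) is the initial segment of $\widehat W$ before the last block, and $j+2=b_r$, so this twig word agrees with $\widehat W$ wherever $\widehat W$ has $V_{b_r}$ and has $R$ in the earlier positions where $\widehat W$ has $R$. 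Lemma~\ref{nodewordrecursion} then tells us to take $V_{b_r}$ over $R$ in the disagreeing positions, yielding exactly $\widehat W$. The second case ($p_k\neq p_{k-1}$) is analogous but now the emergent twig word ends in $R$ precisely where $\widehat W$ should have its brand-new $V_k$, and the retained twig (which is $T(p_1\cdots p_{k-1})$ in case~(1) with the roles arranged so the retained axis is the one whose label has full length $k-1$) contributes the $V_k$ via Theorem~\ref{twigwordrecursion}'s trailing run; combining via Lemma~\ref{nodewordrecursion} again produces $\widehat W$.

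The main obstacle I expect is bookkeeping: correctly identifying \emph{which} of the two twigs at $N(p_1\cdots p_k)$ is the emergent one and which is retained, and — more delicately — reconciling the index $j$ appearing in the retained twig label $T(p_1\cdots p_j)$ (from the case~(1) analysis in the proof of Theorem~\ref{yuugetheorem}) with the block-start index $b_r$ appearing in the claimed formula, so that the subscripts $V_{j+2}$ produced by Theorem~\ref{twigwordrecursion} come out equal to the $V_{b_r}$ demanded by $\widehat W$. Once the correspondence $j+2=b_r$ (equivalently, the retained twig emerged at the level just before the final block began) is pinned down, the position-by-position merge via Lemma~\ref{nodewordrecursion} is mechanical, since in every position the two twig words either agree or differ only by $R$ versus the single relevant $V$, and the lemma's rule is precisely "prefer the $V$".
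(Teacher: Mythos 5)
Your proposal takes essentially the same route as the paper's proof: induction on the length of the label, with the two twig words at the node obtained from Theorem~\ref{twigwordrecursion} (plus the inductive hypothesis applied to the shorter node words) and merged position-by-position via Lemma~\ref{nodewordrecursion}; the paper even avoids your case split on $p_k=p_{k-1}$ versus $p_k\neq p_{k-1}$ by writing the label as $p_1\cdots p_{j-2}1\,2^{k-j+1}$ with final block of length $k-j+1\geq 1$, which covers both cases at once. One small indexing slip to fix: your parenthetical identifying the retained twig as $T(p_1\cdots p_j)$ when the string has the form $p_1\cdots p_j\overline{p_j}\cdots\overline{p_j}$ is off by one --- for the string $212$ this would name $T(21)$, which is the \emph{emergent} twig, whereas the retained twig is $T(2)$, i.e., $T(p_1\cdots p_{j-1})$ in that indexing --- but your later formulation (the retained twig emerged at the level just before the final block began, so that its label has length $b_r-2$ and Theorem~\ref{twigwordrecursion} produces trailing $V_{b_r}$'s) is the correct one and is what the mechanical merge actually needs.
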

For example, $W(222122112)$ is $RRRV_4V_5V_5V_7V_7V_9$.
\begin{proof} We prove this by induction on the length of the label. We have already remarked that $W(\varnothing)=\varnothing$ and $W(11\cdots1)=W(22\cdots2)=RR\cdots R$, since the nodes are end nodes.

Now consider a node with label $p_1p_2\cdots p_{j-2}1 2^{k-j+1}$, ending in a block of length \smash{$k-j+1 \geq 1$}. The node to its left has label $p_1p_2\cdots p_{j-2}1 2^{k-j}1$, and thus the twig to its left has label \linebreak $p_1p_2\cdots p_{j-2}1 2^{k-j}$. The node to its right has label $p_1p_2\cdots p_{j-2} 2 1^{k-j+1}$, and thus the twig to its right has label $p_1p_2\cdots p_{j-2}$. By Theorem~\ref{twigwordrecursion} the twig words are, respectively,
\begin{gather*}
W\big(p_1p_2\cdots p_{j-2}1 2^{k-j}\big)R \qquad \text{and} \qquad W(p_1p_2\cdots p_{j-2})RV_j^{k-j+1}.
\end{gather*}
By the inductive hypothesis, these two words agree through position $j-2$. We now apply Lemma~\ref{nodewordrecursion}. In the next position the second twig word has~$R$, so that our node word begins with $W(p_1p_2\cdots p_{j-2}1)$. After that, the first twig word ends with $V_j^{k-j}R$ and the second twig word ends with $V_j^{k-j+1}$; thus our node word ends with $V_j^{k-j+1}$.

For a node with label $p_1p_2\cdots p_{j-2}2 1^{k-j+1}$, the same arguments apply \emph{mutatis mutandis}.
\end{proof}

\begin{Example}Fig.~\ref{thirdprolongationwords} shows all node words and twig words of the third prolongation of the nodal curve.
\end{Example}

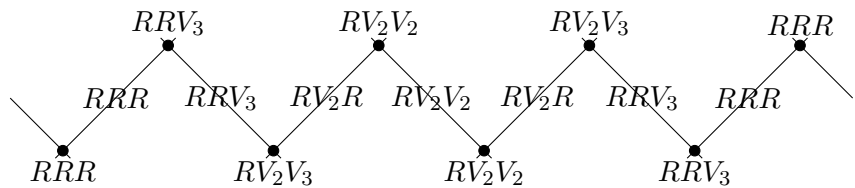
\begin{figure}[htbp]\centering
\begin{tikzpicture}
\draw (0.8,-0.8) --(1.6,-1.6);
\draw (2.8,0) --(4.4,-1.6);
\draw (5.6,0) --(7.2,-1.6);
\draw (8.4,0) --(10.0,-1.6);
\draw (11.2,0) --(12.0,-0.8);

\draw (1.4,-1.6) --(3.0,0);
\draw (4.2,-1.6) --(5.8,0);
\draw (7.0,-1.6) --(8.6,0);
\draw (9.8,-1.6) --(11.4,0);

\draw[fill] (1.5,-1.5) circle [radius=0.07];
\draw[fill] (4.3,-1.5) circle [radius=0.07];
\draw[fill] (7.1,-1.5) circle [radius=0.07];
\draw[fill] (9.9,-1.5) circle [radius=0.07];

\draw[fill] (2.9,-0.1) circle [radius=0.07];
\draw[fill] (5.7,-0.1) circle [radius=0.07];
\draw[fill] (8.5,-0.1) circle [radius=0.07];
\draw[fill] (11.3,-0.1) circle [radius=0.07];

\node [below] at (1.5,-1.5) {$RRR$};
\node [below] at (4.3,-1.5) {$RV_2V_3$};
\node [below] at (7.1,-1.5) {$RV_2V_2$};
\node [below] at (9.9,-1.5) {$RRV_3$};

\node [above] at (2.9,-0.1) {$RRV_3$};
\node [above] at (5.7,-0.1) {$RV_2V_2$};
\node [above] at (8.5,-0.1) {$RV_2V_3$};
\node [above] at (11.3,-0.1) {$RRR$};

\node at (2.2,-0.8) {$RRR$};
\node at (3.6,-0.8) {$RRV_3$};
\node at (5.0,-0.8) {$RV_2R$};
\node at (6.4,-0.8) {$RV_2V_2$};
\node at (7.8,-0.8) {$RV_2R$};
\node at (9.2,-0.8) {$RRV_3$};
\node at (10.6,-0.8) {$RRR$};

\end{tikzpicture}

\caption{The third prolongation of the nodal curve, as shown in Fig.~\ref{thirdprolongation}, together with corresponding node words and twig words.} \label{thirdprolongationwords}
\end{figure}

\section{Speculations about other singularities} \label{speculate}
What should one expect if one carries out a similar analysis for a different sort of singular curve~$C$ on a surface? If one prolongs the curve by itself, without reference to any family, one obtains its \emph{strict prolongation}; for $xy=t$ that is simply the pair of end twigs. If one creates a~one-dimensional family in which~$C$ is the central member and all other members are nonsingular (a~\emph{smoothing family}), one expects that the construction of this paper will yield three sorts of structures:
\begin{enumerate}\itemsep=0pt
\item[1)] a configuration of curves, consisting of the strict prolongation together with prolongations of certain vertical curves;
\item[2)] a one-dimensional cycle obtained by assigning an appropriate multiplicity to each curve in the configuration;
\item[3)] a scheme whose fundamental class is this cycle.
\end{enumerate}

The authors of \cite{MR2434453} consider a locally irreducible curve, e.g., the $A_n$ singularity defined by $y^2=x^{n+1}$, and define a configuration of curves of the type just described. Their analysis shows that their configuration matches the configuration one obtains by the standard process of embedded resolution by point blowups, with the prolongations of vertical curves naturally matching up with the exceptional divisors of the standard process. (See \cite[Chapter~1]{MR2289519} for an account of embedded resolution of curve singularities.) We think that the same configuration will emerge via prolongation of any smoothing family.

Embedded resolution produces even more: it associates a multiplicity to each exceptional divisor, and thus defines a cycle (in fact a divisor, since the entire configuration lies on a~surface). Here again we speculate that the prolongation of any smoothing family will yield the same multiplicities, i.e., that the same cycle will emerge, no matter which smoothing family is used.

As for scheme structure, one should note that the versal deformation space is typically of higher dimension; for example, the versal deformation of the $A_n$ singularity consists of all curves
\begin{gather*}
y^2=x^{n+1} + t_n x^{n} + t_{n-1} x^{n-1} + \cdots + t_1 x + t_0.
\end{gather*}
One can obtain many different one-dimensional smoothing families by taking various curves within the base of the versal deformation. Here we anticipate that one can obtain many dif\-fe\-rent limiting scheme structures on the central fiber, which reflect, in some fashion, the various directions of approach on the base. We hope to pursue these speculations further.

\subsection*{Acknowledgments} \label{ack}
We thank the anonymous referees for their suggestions that improved this paper. We are also grateful to Corey Shanbrom and Richard Montgomery for many useful conversations. This work was partially supported by a grant from the Simons Foundation (\#318310 to Gary Kennedy).

\pdfbookmark[1]{References}{ref}
\LastPageEnding

\end{document}